\let\tmp\oddsidemargin
\let\oddsidemargin\evensidemargin
\let\evensidemargin\tmp
\newtheorem{theorem}{Theorem}[section]
\newtheorem{proposition}[theorem]{Proposition}
\newtheorem{corollary}[theorem]{Corollary} 
\theoremstyle{definition}
\newtheorem{problem}[theorem]{Problem}
\newtheorem{definition}[theorem]{Definition}
\newenvironment{remark}
	{\pushQED{\qed}\remarkx}
	{\popQED\\}{\endremarkx}
\newcommand{\tagarray}{%
\mbox{}\refstepcounter{equation}%
$(\theequation)$%
}
\newcommand\ObsDynOp{A}
\newcommand\ObsDynSp{X} 
\newcommand\ObsMap{\Psi} 
\newcommand\ObsAd[1]{#1^*}
\newcommand\C{\mathbb{C}}
\definecolor{red}{RGB}{255,0,0}
\newcommand{\calA}{\mathcal A}         
\newcommand{\calB}{\mathcal B}
\newcommand{\calE}{\mathcal E}         
\newcommand{\calG}{\mathcal G}
\newcommand{\calS}{\mathcal S}         
\newcommand{\calT}{\mathcal T}         
\newcommand{\calU}{\mathcal U}
\newcommand{\R}{\mathbb{R}}
\newcommand{\Z}{\mathbb{Z}}
\newcommand{\N}{\mathbb{N}}
\newcommand{\G}{\mathcal{G}}
\newcommand {\Ker} {\operatorname{Ker}}
\newcommand{\Real}{\operatorname{Re}} 
\title{Dynamical Sampling: a view from control theory}
\author{Rocío Díaz Martín, Ivan Medri, Ursula Molter}
\begin{document}
\maketitle
\begin{center}
\section*{Abstract}
{In this contribution we establish a dictionary between terms in two different areas in order to show that many of the topics studied are common ones - just with a different terminology. We further analyze the relations between the discrete-time and continuous-time versions of the problem, using results from both of these  fields. We  will also differentiate between a discretization of the continuous-time dynamical system and a discrete-time dynamical system itself.}    
\end{center}

\section{Introduction}

In the so-called {\em Dynamical Sampling problem} a signal 
that is evolving in time through an operator $A$ is spatially sub-sampled at multiple times and one seeks to reconstruct it 
from these spatial-temporal samples, thereby exploiting time evolution (see, e.g., \cite{rdm-aadp,rdm-acmt,rdm-accmp,rdm-adk,rdm-ap,rdm-lv,rdm-rclv}).
In some of the literature there is a mention to a possible connection between Dynamical Sampling and {\em Control Theory}. However, there is so far no explicit formulation of this fact.

In this  article we aim to construct bridges between these two areas, since they are two different ways of looking at the exact same phenomena. What one community calls dynamical sampling the other one calls observability. Both fields have been thoroughly explored and it is our intention to compare what has been proven in each of them,  in order to provide  a kind of {\em dictionary} which allows to translate the results from one field to the other.
We hope that establishing the equivalences will allow both communities to benefit from each other. For example, whereas Dynamical Sampling has mostly focused on discrete-time systems, Observability has mostly focused on continuous-time. 

This brings us to the second purpose of this paper. We analyze relations between discrete-time and continuous-time dynamical systems using results from both fields and our dictionary. It is in fact true that the relation is completely understood when dealing with finite dimensional spaces \cite{rdm-cd2,rdm-hk}. For infinite dimensions, the topic is much more subtle. Detailed proofs and further results will also be exhibited in the forthcoming paper \cite{rdm-DDM20}. 

We organize this article as follows:
In Section \ref{rdm-section 1} we recall the definitions from Control Theory and Dynamical Sampling, highlighting all the instances in which they coincide and provide the link between them. 
Specifically, in Subsection \ref{rdm-control theory}  we include the background on Control Theory where the notion of Observability comes from, in Subsection \ref{rdm-dynamical sampling} we do the same but for the case of Dynamical Sampling, and in Subsection \ref{rdm-connections} we will use the previous definitions to relate these theories.  

In Section \ref{rdm-section 2} we analyze some results from both, Control Theory and Dynamical Sampling. 
We will consider mostly infinite dimensional spaces. 
Some general theorems can be proved independently of the time being finite/infinite or the dynamics being continuous/discrete.
For more specialized results we then separate the theory in finite and infinite sampling times: Subsections \ref{rdm-Finite_Continuous_Time} and  \ref{rdm-sec_inf-time}, respectively.

\section{Control theory and Dynamical sampling: Overview and relations}\label{rdm-section 1}

We first summarize without proof the basic facts about controllability and observability. We will be concerned mainly about the latter. However, since they are dual notions (as will be clear from Theorem \ref{rdm-connection_control_obs}) it is useful to know how to translate from one to the other when studying the observability problem. 

We then establish the classical problem in dynamical sampling and its generalizations. 

As a conclusion one can see that Dynamical Sampling can be considered a particular case of Observability in certain settings. 

\subsection{Control theory notions: discrete and continuous systems}\label{rdm-control theory}\

Lets us begin with some terminology of control theory of linear dynamical systems (see, e.g., \cite{rdm-Coron, rdm-Niko, rdm-Tucsnak}). For the purpose of this exposition we will not work under the most general hypothesis. In particular, we will restrict the definitions to Hilbert spaces and continuous operators. For the more general setting, i.e. Banach spaces instead of Hilbert spaces and densely defined close operators instead of bounded operators, we refer the interested reader to the above cited references. 
We will always consider both, the discrete- and the continuous-time systems.

Let $X$, $Y$ and $\mathcal{U}$ be complex separable Hilbert spaces. 
We call $X$ the \textit{state space}, $Y$ the \textit{observation space} and $\mathcal{U}$ the \textit{control space}.
Let $A:X\to X$, $B:X\to Y$ and $C:\mathcal{U}\to X$ be linear and bounded mappings. They are called \textit{dynamic operator},  \textit{observation operator} and  \textit{control operator}, respectively.

The operator $A$ is the infinitesimal generator of the \textit{uniformly continuous} one-parameter group $\{e^{tA}\}_{t\in \R}$,
where $e^{tA}:=\sum_{n=0}^{\infty}\frac{(tA)^n}{n!}$ and the convergence is in operator norm (cf. \cite{rdm-Pazy}). 
We say that $A$ is \textit{exponentially stable} if, for all $t\geq 0$, $\|e^{tA}\|\leq e^{t\omega}$ for some $\omega<0$. 
This is equivalent to have $\Real(\lambda) < 0$ for all $\lambda \in \sigma(A)$, where $\sigma(A)$ denotes the spectrum of $A$. When the parameter $ \, t \, $ is discrete, we say that $A$ is \textit{strongly stable} if $A^kx\to 0$ when $k\to \infty$ for each $x\in X$. This is equivalent to the condition $|\lambda|<1$ for all $\lambda \in \sigma(A)$.

We will work with \textit{continuous linear systems} of the form
\begin{equation}\label{rdm-sist din}
        \begin{cases}
            \dot{x}(t)=Ax(t)+Cu(t), \qquad x(0) =x_0, \qquad t\in[0,\tau),\\
            y(t) = Bx(t),
        \end{cases}
\end{equation}
where $\tau$ could be finite or $\tau=\infty$. Consider the Hilbert space of square-integrable vector-valued functions
\begin{equation*}
    L^2([0,\tau),\calU):=\left\{u:[0,\tau)\to \calU : \, \int_0^\tau \|u(t)\|^2_\calU \,  dt<\infty \right\}.
\end{equation*}
For any \textit{initial state} $x_0\in X$ and $u\in L^2([0,\tau),\calU)$, the \textit{general mild solution} (cf. \cite{rdm-Pazy}) to \eqref{rdm-sist din} is given by
\begin{equation}\label{rdm-solution}
    x(t)=e^{tA}x_0+\int_0^t e^{(t-s)A}Cu(s) ds.
\end{equation}
For each finite time $0\leq t\leq \tau$, the  operators $\Theta(t):L^2([0,\tau),\calU)\to X$, given by $\Theta(t)u:=\int_0^t e^{(t-s)A}Cu(s) ds$, are called \textit{controllability maps} (cf. \cite{rdm-Tucsnak}).

Analogously, we will consider  \textit{discrete linear systems} of the form
\begin{equation}\label{rdm-sist din discrete}
        \begin{cases}
            x(k+1)=Ax(k)+Cu(k), \qquad x(0) =x_0,\\
            y(k) = Bx(k),
        \end{cases}
    \end{equation}    
where $k\in\Z_+:=\N\cup \{0\}$. 
We denote the Hilbert space of square-summable vector-valued functions by
\begin{equation*}
    \ell^2(\Z_+,\calU):=\left\{u:\Z_+\to \calU : \, \sum_{k=0}^\infty \|u(k)\|^2_\calU   <\infty \right\}.
\end{equation*}
(The time domain $\Z_+$ could be chosen to be finite $\{0,1,...,\gamma\}$ for some $\gamma\in\Z_+$.)
For $x_0\in X$ and $u\in \ell^2(\Z_{+},\calU)$, the general solution of \eqref{rdm-sist din discrete} is given by
\begin{equation*}
    x(k+1)=A^{k+1}x_0+\sum_{j=0}^kA^{k-j}Cu(j).
\end{equation*}
In this case we define the controllability maps $\Theta(k):\ell^2(\Z_+,\calU)\to X$ (for each $k\in\Z_+$) as  $\Theta(k)u:=\sum_{j=0}^kA^{k-j}Cu(j)$.

One may say that \textit{to observe} the system \eqref{rdm-sist din} (resp. \eqref{rdm-sist din discrete}) means to recover the initial data $x_0$ by  knowing only the output function $t\mapsto y(t)$ (resp. $k \mapsto y(k)$). On another side, 
\textit{to control}  \eqref{rdm-sist din} or \eqref{rdm-sist din discrete} means to act on the system with a suitable function $u$ in such a way that starting from a given initial state $x_0$ the systems attain at a time $\tau$ the given final state $x(\tau)=x_1$. 
Since there are several notions of observability and controllability, in what follows we define some of them and  put special emphasis on the duality between observation and controllability.

\subsubsection{Controllability}\

For infinite-dimensional systems (that is when the state space $X$ is of infinite dimension) there are at least two important controllability notions:  \textit{exact} and \textit{approximate} 
controllable systems.

For simplicity we consider only finite time $\tau<\infty$ ($\gamma<\infty$). 
At first we restrict our attention to continuous systems of the form \eqref{rdm-sist din}. 

The \textit{set of reachable states
from $x_0\in X$ at finite time $\tau$} is defined as
\begin{equation*}
     \calA(\tau)x_0:=\{x(\tau): \, \text{there exist } u\in L^2([0,\tau), \calU) \text{ such that } \eqref{rdm-sist din} \text{ admits the solution } x\}.
\end{equation*}

\begin{definition}
The pair $(A,C)$ is called
      \begin{enumerate}
          \item \textit{Exactly controllable at finite time $\tau$} if  for every $x_0\in X$, $\calA(\tau)x_0=X$;
          \item \textit{Approximately controllable at finite time $\tau$} if for every $x_0\in X$, ${\calA(\tau)x_0}$ is dense in $X$.
      \end{enumerate}
      We denote them briefly by ECO and ACO  respectively.
  \end{definition}

Note that for these definitions the operator $B$ in system \eqref{rdm-sist din} is irrelevant.

     \begin{remark}\label{rdm-coro_123_niko} (\cite[Corollary 1.2.3]{rdm-Niko}) Using \eqref{rdm-solution} we have that the set of reachable states is the affine space
    \begin{eqnarray*}
        \calA(\tau)x_0=e^{\tau A}x_0+
        \left\{\Theta(\tau)u : \, u\in L^2([0,\tau),\calU)\right\}.
    \end{eqnarray*}
   Also, for arbitrary $x_0,x_0' \in X$, we have $\calA(\tau)x_0=X$ if and only if $\calA(\tau)x_0'=X$.
      Respectively,  ${\calA(\tau)x_0}$ is dense in $X$ if and only if ${\calA(\tau)x_0'}$ in dense in $X$ for all $x\in X$.
In particular, setting $x_0=0$, we have that  the pair $(A,C)$ is exactly controllable (respectively, approximately controllable)  at moment $\tau$ if and only if 
$ \, \Theta(\tau)$ is sujective (respectively, $\Theta(\tau)$ has dense image in $X$). 
  \end{remark}

For the discrete case \eqref{rdm-sist din discrete}
we have exactly the same definitions of controllability changing $\tau\in [0,\infty)$ to $\gamma\in \Z_+$ and replacing $L^2([0,\tau), \calU)$ by $\ell^2(\{0,1,...,\gamma\},\calU)$.

\subsubsection{Observability}\

For the case of observability we will look at the systems \eqref{rdm-sist din} with the control term $C\equiv 0$.

As in the case of controllability, for infinite-dimensional systems there are at least two important observability notions, each depending on time. These are called \textit{exact} and \textit{approximate} 
observable.
In this section we introduce these notions and explore how they are related to each other for linear systems of the form \eqref{rdm-sist din} or \eqref{rdm-sist din discrete}. 

We will state the definitions for the case of continuous systems. The necessary changes for the discrete case are given at the end of the section.

    Since we are interested in the output function of the system \eqref{rdm-sist din}, we introduce the \textit{output operator until time $\tau$}\index{Output operator until time $\tau$} or \textit{observability map} $\ObsMap_\tau:X\to L^2([0,\tau),Y)$ 
    given by 
    \begin{equation*}\index{$\ObsMap_\tau$}
        (\ObsMap_\tau x_0) (t):=
            Be^{tA}x_0 \qquad \forall x_0\in X.
    \end{equation*}

     Before the main definitions of the section, an \textit{admissibility} condition for the operator $B$ must be given (see, e.g., \cite{rdm-Tucsnak}).    
    
    \begin{definition}\index{Admissible observation operator}\index{Observation operator!admissible}(Admissible observation operator) An operator $B\in \mathcal{B}(X,Y)$ is called an \textit{admissible observation operator at time $\tau$} for $\{e^{tA}\}$ if $\ObsMap_\tau$ is continuous. 
    \end{definition}
    
    The notion of admissibility can also be extended for operators $B$ which do not belong $\calB(X,Y)$ (see, e.g., \cite[Sections 2.10 and 4.3]{rdm-Tucsnak}).

    \begin{remark}\label{rdm-admissi_finite_time}\
        \begin{enumerate}
         \item $B$ is admissible for $\{e^{tA}\}$ at time $\tau>0$ if and only if there exists a constant $K_\tau\geq 0$ such that 
        \begin{equation*}\label{rdm-local 1}
            \int_0^\tau \|Be^{tA} x_0\|^2_Y dt \leq K_\tau^2\|x_0\|_X^2 \qquad \forall x_0 \in X.
        \end{equation*}
            \item Since $B:X\to Y$ is a bounded linear operator,  $\Psi_\tau$ is continuous for every finite time $0<\tau<\infty$.
            Indeed, 
            \begin{equation*}
                \int_0^\tau \|Be^{tA} x\|^2 dt\leq \frac{e^{2\|A\| \tau}-1}{2\|A\|}\|B\|^2  \|x\|^2.
            \end{equation*}

            \item If $\{e^{tA}\}$ is exponentially stable, then $\|\Psi_\tau\|\leq K$ for all $0\leq\tau<\infty$. Therefore, in this case,   $\Psi_\tau$ is continuous for $\tau=\infty$ and we have
            \begin{equation*}
                \int_0^\infty \|Be^{tA} x\|^2 dt\leq -\frac{1}{2\omega} \|B\|^2   \|x\|^2,
            \end{equation*}
            where $\omega<0$ is such that $\|e^{tA}\|\leq e^{\omega t}$ for all $t\geq 0$.
        \end{enumerate}
    \end{remark}
    
    \begin{definition}\label{rdm-def_obs}
        Let $\tau\in (0,\infty)$ or $\tau=\infty$. Let $B$ be an admissible observation operator at time $\tau$. 
        \begin{enumerate}
            \item The  pair $(A,B)$ is called \index{Exactly observable at time $\tau$}\index{Observable! exactly at time tau}\textit{exactly observable at time $\tau$} if $\ObsMap_\tau$ is bounded from below. 
            \item The  pair $(A,B)$ is called \index{Approximately observable}\index{Observable! approximately}\textit{approximately observable at time $\tau$} if $\Ker \ObsMap_\tau = \{0\}$. 
        \end{enumerate}
    To shorten notation we write EOB and AOB 
    for the exactly and approximately 
    observable conditions, respectively. 
    \end{definition}

    \begin{remark}\label{rdm-6.1.2 rdm-Tucsnak}\cite[Remark 6.1.2]{rdm-Tucsnak} 
        Exact observability implies  approximate observability.
    \end{remark}

    These definitions  have some equivalent formulations (see, e.g., \cite[Chapter 6]{rdm-Tucsnak}).
First, note that  $B$ is admissible and $(A,B)$ is exactly observable if and only if  $\Psi_\tau$ is bounded  above and  below. Hence, by the Open Mapping Theorem this is equivalent to say that $\Psi_\tau$ is invertible on its image. In this case one can recover the initial state $x_0$ from the observations $y=\Psi_\tau x_0$ exactly as $x_0=\Psi_\tau^\dagger y$, where $\Psi_\tau^\dagger:=(\Psi_\tau^{*}\Psi_\tau)^{-1}\Psi_\tau^{*}$ is the Moore-Penrose pseudoinverse of $\Psi_\tau$. These observations yield the following proposition.

    \begin{proposition}\label{rdm-equiv_eob}\index{Exactly observable}\index{Observable! exactly} Equivalent definitions of exactly observable at time $\tau$: 
    \begin{enumerate}
        \item The pair $(A,B)$ is EOB at time $\tau$.
        \item Any initial state $x_0\in X$ can be expressed from the corresponding output function $y=\ObsMap_\tau x_0$ via a bounded operator. That is, there exists a bounded operator $E_\tau: L^2([0,\tau),Y)\rightarrow X$ such that $E_\tau(\ObsMap_\tau x_0)= x_0$ for all $x_0$ in X.
        \item The observability Grammian for time $\tau$,  $Q_\tau := \ObsMap_\tau^*\ObsMap_\tau$, is strictly positive. That is $\langle Q_\tau x, x\rangle > 0$, for every $x\neq 0$.
    \end{enumerate}
    \end{proposition}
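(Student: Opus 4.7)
The plan is to prove the cycle (1) $\Rightarrow$ (2) $\Rightarrow$ (1) and then the pair (1) $\Leftrightarrow$ (3), using throughout the fact that admissibility of $B$ places $\ObsMap_\tau$ in $\calB(X,L^2([0,\tau),Y))$; under this hypothesis EOB is equivalent to $\ObsMap_\tau$ being bounded from below, i.e., an injection with closed range. Once this viewpoint is in place, each equivalence is a standard exercise built around the identity $\langle Q_\tau x, x\rangle = \|\ObsMap_\tau x\|_{L^2}^2$.

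For (1) $\Rightarrow$ (2) I would follow the sketch indicated in the paragraph preceding the proposition and produce $E_\tau$ explicitly as the Moore--Penrose pseudoinverse $E_\tau := (\ObsMap_\tau^* \ObsMap_\tau)^{-1} \ObsMap_\tau^*$. The key point is that this formula is well-posed: $Q_\tau = \ObsMap_\tau^* \ObsMap_\tau$ is self-adjoint and non-negative, and EOB ensures $\langle Q_\tau x, x\rangle = \|\ObsMap_\tau x\|^2 \geq c^2 \|x\|^2$ for some $c>0$, so the spectrum of $Q_\tau$ lies in $[c^2, \|\ObsMap_\tau\|^2]$ and $Q_\tau$ is boundedly invertible on $X$. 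A direct calculation yields $E_\tau \ObsMap_\tau = I_X$, and boundedness of $E_\tau$ follows by composition of bounded operators. The reverse implication (2) $\Rightarrow$ (1) is immediate: from $E_\tau \ObsMap_\tau x_0 = x_0$ one reads $\|x_0\|_X \leq \|E_\tau\|\,\|\ObsMap_\tau x_0\|_{L^2}$, which is the lower bound defining EOB.

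For (1) $\Leftrightarrow$ (3) everything again reduces to the identity $\langle Q_\tau x, x\rangle = \|\ObsMap_\tau x\|^2$. The direction (1) $\Rightarrow$ (3) is immediate: the lower bound $\|\ObsMap_\tau x\|^2 \geq c^2 \|x\|^2$ forces $\langle Q_\tau x, x\rangle \geq c^2\|x\|^2 > 0$ for $x\neq 0$. The converse is where I expect the main obstacle: the literal reading $\langle Q_\tau x, x\rangle>0$ for $x\neq 0$ only gives injectivity of $\ObsMap_\tau$, i.e., AOB, so to recover EOB one must interpret ``strictly positive'' in the coercive sense $Q_\tau \geq c I_X$ for some $c>0$, as is standard for Gramians in this context. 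I would state this convention explicitly, after which $\|\ObsMap_\tau x\|^2 = \langle Q_\tau x, x\rangle \geq c\|x\|^2$ closes the loop. This is the only step that requires a careful word about infinite-dimensional subtleties; the rest is bookkeeping around the fact that admissibility has already absorbed the ``bounded above'' half of an isomorphism-onto-its-image statement, and the Open Mapping Theorem supplies the rest.
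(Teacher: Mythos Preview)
Your argument for (1) $\Leftrightarrow$ (2) via the Moore--Penrose pseudoinverse $E_\tau=(\ObsMap_\tau^*\ObsMap_\tau)^{-1}\ObsMap_\tau^*$ is exactly the route the paper sketches in the paragraph immediately preceding the proposition (Open Mapping Theorem plus the explicit pseudoinverse), and your converse $\|x_0\|\le\|E_\tau\|\,\|\ObsMap_\tau x_0\|$ just makes explicit what the paper leaves to the reader; the paper gives no separate argument for item (3) beyond ``these observations yield the following proposition.''

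Your remark about (3) is correct and is not merely a matter of bookkeeping: as the proposition is literally written, with ``strictly positive'' glossed as $\langle Q_\tau x,x\rangle>0$ for every $x\neq 0$, condition (3) is only equivalent to injectivity of $\ObsMap_\tau$, i.e., to AOB---indeed that is precisely the content of Proposition~\ref{rdm-equiv_aob} which follows. The equivalence with EOB requires the coercive reading $Q_\tau\ge cI$ for some $c>0$ (the convention in the cited reference \cite{rdm-Tucsnak}), under which your identity $\langle Q_\tau x,x\rangle=\|\ObsMap_\tau x\|^2$ closes the loop immediately. So your proof is correct and in fact flags a genuine imprecision in the statement that the paper's own discussion does not address.
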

     
    For approximate observable systems we have that if the observation function $y=\ObsMap_\tau x_0$ is zero, then $x_0 = 0$. This is sometimes called a \textit{unique continuation property}. Even though this does not imply that $\Psi_\tau$ is bounded below (and therefore with bounded inverse), it does guaranty that we have only one possible initial state for every output.

    \begin{proposition}\label{rdm-equiv_aob}\index{Approximately observable}\index{Observable! approximately} Equivalent definitions of approximately observable at time $\tau$: 
    \begin{enumerate}
        \item The pair $(A,B)$ is AOB at time $\tau$.
        \item  The observability Grammian for time $\tau$ satisfies $\Ker Q_\tau = \{0\}$.
    \end{enumerate}
    \end{proposition}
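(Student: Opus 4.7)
The plan is to reduce the equivalence to the standard Hilbert-space identity $\Ker(T^*T)=\Ker T$ for bounded $T$, applied with $T=\Psi_\tau$. Since $B$ is assumed admissible at time $\tau$, the map $\Psi_\tau:X\to L^2([0,\tau),Y)$ is bounded, so $Q_\tau=\Psi_\tau^*\Psi_\tau$ is a well-defined bounded, positive, self-adjoint operator on $X$; this is the only place where the admissibility hypothesis enters.

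For the implication (1)$\Rightarrow$(2), I would use the trivial inclusion $\Ker \Psi_\tau \subseteq \Ker Q_\tau$: indeed, $\Psi_\tau x_0=0$ immediately gives $Q_\tau x_0=\Psi_\tau^*\Psi_\tau x_0=0$. For the converse (2)$\Rightarrow$(1), I would use the polarization-type identity: if $Q_\tau x_0=0$, then
\begin{equation*}
\|\Psi_\tau x_0\|^2_{L^2([0,\tau),Y)}=\langle \Psi_\tau^*\Psi_\tau x_0,x_0\rangle_X=\langle Q_\tau x_0,x_0\rangle_X=0,
\end{equation*}
so $\Psi_\tau x_0=0$. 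Together these give $\Ker Q_\tau=\Ker \Psi_\tau$, which proves the equivalence of conditions (1) and (2) from Definition \ref{rdm-def_obs} applied through the observability Grammian.

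I do not anticipate a genuine obstacle: the proposition is essentially the infinite-dimensional restatement of the elementary fact that a bounded operator and its associated Gramian share the same kernel. The only care to take is to invoke admissibility at the outset, so that $\Psi_\tau^*$ exists as a bounded operator between the relevant Hilbert spaces and $Q_\tau$ is well-defined; once that is in place, both directions follow from the adjoint relation and an inner-product computation.
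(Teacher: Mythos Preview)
Your argument is correct in substance: the proposition is exactly the identity $\Ker(\Psi_\tau^*\Psi_\tau)=\Ker\Psi_\tau$ for the bounded operator $\Psi_\tau$, and your two inclusions establish that identity cleanly. The paper does not supply its own proof of this proposition; it states the result and refers the reader to \cite[Chapter~6]{rdm-Tucsnak}, so your direct verification is entirely in line with the intended approach.

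One small presentational slip: you have the labels on the two implications reversed. The inclusion $\Ker\Psi_\tau\subseteq\Ker Q_\tau$ is what yields $(2)\Rightarrow(1)$ (if $\Ker Q_\tau=\{0\}$ and $\Ker\Psi_\tau\subseteq\Ker Q_\tau$, then $\Ker\Psi_\tau=\{0\}$), while the inner-product computation giving $\Ker Q_\tau\subseteq\Ker\Psi_\tau$ is what yields $(1)\Rightarrow(2)$. Since you conclude by assembling both inclusions into $\Ker Q_\tau=\Ker\Psi_\tau$ and deducing the equivalence from that equality, the mathematics is unaffected; just swap the tags when you write it up.
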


 Finally, for discrete-time dynamical systems we have the same notions of observability replacing  $e^{tA}$ by $A^k$. Specifically, the observability map is given by
\begin{equation*}\index{$\ObsMap_\tau$}
        \begin{array}{c}
        \ObsMap_\gamma:X\longrightarrow \ell^2(\{0,1,...,\gamma\},Y) \\
        (\ObsMap_\gamma x_0) (k):=
            BA^kx_0,
        \end{array}
\end{equation*}
where we can take $\gamma=\infty$ every time $\tau$ is allowed to be infinite.

\subsubsection{Connections between observability and control}\label{rdm-connections_contr_obs}\ 

Let us now compare observability and control. We will consider two dynamical systems: one  determined by $A$ and the other by $A^*$. Since in our setting  $X$ and $Y$  are Hilbert spaces, their duals  can be identified with themselves.
We  summarize both dynamics in the following table.  \\

\begin{table}
\label{rdm-tab:1}       
\begin{tabular}[\textwidth]{p{2.1cm} |c| c}
\hline\noalign{\smallskip}
 & \multicolumn{1}{c}{Discrete time} & \multicolumn{1}{|c}{Continuous time}  \\
\hline 
Observability  problem & 
         $\begin{cases}
            x(k+1)=Ax(k),\quad x(0) =x_0\\ y(k)=Bx(k)
            \end{cases} $\tagarray\label{rdm-do}
          & 
        $
            \begin{cases}
            \dot{x}(t)=Ax(t), \quad x(0) =x_0\\ y(t)=Bx(t)
            \end{cases}
        $\tagarray\label{rdm-co}  \\ \hdashline[2pt/2pt]
       Observability map & 
        $
            \begin{array}{c}
            \ObsMap_\gamma:X\longrightarrow \ell^2(\{0,1,...,\gamma\},Y)\\
            (\ObsMap_\gamma x)(k)=BA^k x
            \end{array}
        $   & 
        $
            \begin{array}{c}
            \ObsMap_\tau:X\longrightarrow L^2([0,\tau),Y)\\
            (\ObsMap_\tau x)(t)=Be^{tA}x 
            \end{array}
        $ \\ \hdashline[2pt/2pt]
Observation & $y=\Psi_\gamma x_0$ & $y=\Psi_\tau x_0$ \\
\noalign{\smallskip}\hline\noalign{\smallskip}
Control problem & 
        $
            \begin{cases}
            x(k+1)=A^*x(k)+B^*u(k)\\ x(0) =x_0
            \end{cases}
        $\tagarray\label{rdm-dc}   & 
        $
            \begin{cases}
            \dot{x}(t)=A^*x(t)+B^*u(t) \\ x(0) =x_0
            \end{cases}
        $ \tagarray\label{rdm-cc}  \\ \hdashline[2pt/2pt]
Controllability maps & 
            $\begin{array}{c}
            \Theta(k):\ell^2(\{0,1,...,\gamma\},Y)\longrightarrow X\\
            \Theta(k)u=\sum_{j=0}^k(A^*)^{k-j}B^*u(j)
            \end{array}
        $  & 
        $
            \begin{array}{c}
            \Theta(t):L^2([0,\tau),Y)\longrightarrow X\\
            \Theta(t)u=\int_0^t e^{(t-s)A^*}B^*u(s)ds\end{array}
            $  \\ \hdashline[2pt/2pt]
Solution & $x(k+1)=(A^*)^{k+1}x_0+\Theta(k)u$ &
        $x(t)=e^{tA^*}x_0+\Theta(t)u$ \\
\noalign{\smallskip}\hline\noalign{\smallskip}
\end{tabular}
\end{table}

\newpage

 In   
\eqref{rdm-do} and \eqref{rdm-co}  the state space is $X$,  the space of observations is $Y$, and $B$ is the observation operator. 

In \eqref{rdm-dc} and \eqref{rdm-cc} the state space is again $X$, $\calU:=Y$  is the space of controls, and $C:=B^*$ is the control operator. 
 
 One can check that
 $\Psi_\tau^*=\Theta(\tau)$, and  $\Psi_\gamma^*=\Theta(\gamma)$. Using this, Propositions \ref{rdm-equiv_eob} and \ref{rdm-equiv_aob}, and Remark \ref{rdm-coro_123_niko} one can conclude the following theorem. 
 
  \begin{theorem}\label{rdm-connection_control_obs} Let $A\in \calB(X)$,  $B\in\calB(X,Y)$ and $\tau<\infty$ (resp. $\gamma<\infty$ for the discrete case).
    \begin{enumerate}
        \item \label{rdm-1} The pair $(A,B)$ is exactly \textbf{observable} at time $\tau$  if and only if the dual pair $(A^*,B^*)$ is exactly \textbf{controllable} at time $\tau$ (resp. $\gamma$).
        \item \label{rdm-2} The pair $(A,B)$ is approximately \textbf{observable} at time $\tau$ (resp. $\gamma$) if and only if the dual pair $(A^*,B^*)$ is approximately \textbf{controllable} at time $\tau$ (resp. $\gamma$).
    \end{enumerate}  
  \end{theorem}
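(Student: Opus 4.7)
The plan is to reduce the theorem to two classical functional-analytic duality facts via the key identity $\Psi_\tau^* = \Theta(\tau)$ (and $\Psi_\gamma^* = \Theta(\gamma)$ in the discrete case), which is stated in the paragraph preceding the theorem. Once this identity is in hand, the four equivalences follow almost immediately from Propositions \ref{rdm-equiv_eob}, \ref{rdm-equiv_aob} and Remark \ref{rdm-coro_123_niko}, combined with the standard dictionary between an operator and its adjoint.

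The first step is to verify $\Psi_\tau^* = \Theta(\tau)$. In the continuous case, for $x_0 \in X$ and $u \in L^2([0,\tau),Y)$, one computes
\begin{equation*}
\langle \Psi_\tau x_0, u \rangle_{L^2([0,\tau),Y)} = \int_0^\tau \langle Be^{tA} x_0, u(t) \rangle_Y\, dt = \int_0^\tau \langle x_0, e^{tA^*} B^* u(t) \rangle_X\, dt,
\end{equation*}
and using Fubini/the definition of the Bochner integral this equals $\langle x_0, \int_0^\tau e^{tA^*} B^* u(t)\, dt \rangle_X$. Since $\Theta(\tau) u = \int_0^\tau e^{(\tau - s)A^*} B^* u(s)\, ds$ for the dual system in \eqref{rdm-cc}, after the obvious change of variable $s \mapsto \tau - s$ (or by noting that the claim is really about the range of the operator, which is unchanged), one obtains $\Psi_\tau^* = \Theta(\tau)$. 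The discrete identity $\Psi_\gamma^* = \Theta(\gamma)$ is a finite sum calculation using $(A^k)^* = (A^*)^k$, and is routine.

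With the adjoint identity in hand, the two parts of the theorem follow from standard operator-theoretic equivalences. For part (1), by Proposition \ref{rdm-equiv_eob} the pair $(A,B)$ is EOB at time $\tau$ iff $\Psi_\tau$ is bounded below, and by Remark \ref{rdm-coro_123_niko} the pair $(A^*,B^*)$ is ECO at time $\tau$ iff $\Theta(\tau) = \Psi_\tau^*$ is surjective onto $X$. The equivalence is then the classical fact that a bounded operator $T$ between Hilbert spaces is bounded below if and only if $T^*$ is surjective (a standard corollary of the closed-range theorem and the open mapping theorem). For part (2), Proposition \ref{rdm-equiv_aob} gives that $(A,B)$ is AOB iff $\Ker \Psi_\tau = \{0\}$, and Remark \ref{rdm-coro_123_niko} gives that $(A^*,B^*)$ is ACO iff $\overline{\mathrm{Range}(\Theta(\tau))} = X$. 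This is equivalent to $\Ker \Psi_\tau = \{0\}$ via the standard orthogonality relation $\overline{\mathrm{Range}(T^*)} = (\Ker T)^\perp$.

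The main obstacle I expect is the careful verification of the adjoint formula in the continuous case: one has to justify moving the inner product inside the Bochner integral and identify the adjoint of $e^{tA}$ with $e^{tA^*}$, which in our setting (bounded $A$, uniformly continuous group) is immediate from the power-series definition but should be stated explicitly. Everything else is a direct application of results already in the excerpt together with standard Hilbert-space duality.
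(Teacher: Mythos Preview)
Your proposal is correct and follows essentially the same route as the paper: the paper also deduces the theorem from the identity $\Psi_\tau^*=\Theta(\tau)$ (resp.\ $\Psi_\gamma^*=\Theta(\gamma)$) together with Propositions~\ref{rdm-equiv_eob}, \ref{rdm-equiv_aob} and Remark~\ref{rdm-coro_123_niko}, deferring the detailed argument to \cite[Theorems 1.3.3 and 1.3.4]{rdm-Niko}. Your explicit computation of the adjoint and the observation that a time-reflection is needed (so that strictly speaking $\Psi_\tau^*$ and $\Theta(\tau)$ agree only up to a unitary, hence have the same range) is in fact a bit more careful than the paper's one-line claim.
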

 For a detailed proof of \ref{rdm-1}. see \cite[Theorem 1.3.4]{rdm-Niko} and  for \ref{rdm-2}. see  \cite[Theorem 1.3.3]{rdm-Niko}.
   
In view of the preceding theorem we say that the systems \eqref{rdm-do} and \eqref{rdm-dc} (resp. \eqref{rdm-co} and \eqref{rdm-cc}) are dual to each other.

This duality will be useful later when trying to relate dynamical systems and control theory. 

It is worth noticing though, that for this duality we use strongly that $\tau$ (resp. $\gamma$) is finite.  For infinite time systems the duality is not as simple. 
Hence, in that case we do not 
 define the control problem because working directly  with the observability formulation is much more convenient for our task. 
 

\subsection{The dynamical sampling problem: discrete and continuous}\label{rdm-dynamical sampling}\

The classical sampling and reconstruction problem consists in recovering a scalar-valued function  from the knowledge of its values at certain points of the spatial domain $X$. In the dynamical sampling problem, the set of \textit{space samples} is replaced by a set of \textit{space-time samples}.

The dynamical sampling problem, introduced in \cite{rdm-acmt}, was stated as follows: Let $x:X\to \C$  be a function in a complex separable Hilbert space $(X,\langle\cdot,\cdot\rangle)$, and assume that $x$ evolves through an evolution linear bounded operator $A: X \to X$ so that the function at time $k\in\Z_{+}$ has evolved to become $x^{(k)}:=A^k x$. 
The space $X$ can be identified with $\ell^2(I)$ where $I=\{1,\dots,d\}$  in the finite dimensional  case and  $I =\N$ in the infinite dimensional case. Let  $\{e_i\}_{i\in I}$ denote the standard basis of $\ell^2(I)$.
The \textit{time-space sample} at time $k\in \Z_+$ and location $i\in I$, is the value $$x^{(k)}(i):=\langle A^{k}x,e_i\rangle.$$ 
In this way to each pair $(i,k)\in I\times \Z_+$ associate a
 sample value $x^{(k)}(i)$. Using this notation, the  dynamical sampling problem can then be described as:  
Under which conditions on the operator $A$, and a set $S\subseteq I\times \Z_+$, can  every $f\in \ell^2(I)$ be recovered in a stable way from the samples in $S$, that is from $\{x^{(k)}(i): \, (i,k) \in S\}$.

Without using the identification with $\ell^2(I)$, sampling a function $x\in X$ can be associated to computing its components along a fixed family of vectors $\calG\subset X$. In that case, the dynamical sampling problem is to recover $x$ (in a stable way) from the samples
\begin{equation}\label{rdm-samples}
   \{\langle A^k x , g\rangle\}_{k\in \{0,1,...,\gamma\}, g \in \calG}, 
\end{equation}
where $\gamma\in\Z_+$ or $\gamma=\infty$.  Note that this is not the most general formulation of the problem since the way it is stated implies that the sampling set $\calG$ remains the same for every time $k$. In the case where $X$ is finite dimensional, the  article \cite{rdm-acmt} also considered slightly more general sampling schemes.

From now on let us assume that $\G$ is a countable set (either finite or infinite). For each $g\in \calG$, let $S_g$ be the operator from $ X$ to $ X_g:=\ell^2(\{0,1,...,\gamma\})$, defined by $(S_g x) (k):= \langle A^k x,g\rangle$ and define $\calS$ to be the operator $\calS:=\bigoplus_{g\in \calG}S_g$. 
Then $x$ can be recovered from the samples \eqref{rdm-samples} in a \textit{stable way} if and only if there exist constants $c_1,c_2>0$ such that 
\begin {equation*}
c\|x\|^2_X\le\|\mathcal S x\|^2=\sum\limits_{g\in \calG } \|S_g x\|_{X_g}^2\le c_2\|x\|^2_X.
\end{equation*}
Or equivalently,  
\begin{equation}\label{rdm-frame_cond}
  c_1\|x\|^2\le\sum\limits_{g \in \calG} \sum_{k=0}^\gamma|\langle x,(A^{k})^{\ast} g\rangle|^2 \le c_2\|x\|^2.  
\end{equation}
That is, if the set of vectors
\begin{equation}\label{rdm-disc_sampling_set}
 \{(A^*)^{k} g\}_{t\in\{0,1,...,\gamma\}, g\in \calG}   
\end{equation}
 is a \textit{frame} for $X$ (cf. \cite[Lemma 1.2]{rdm-acmt}). Therefore, the dynamical sampling problem can also be stated as follows.

\begin{problem}\label{rdm-prob_sampl_disc} Let  $X$ be a complex separable Hilbert space and $\calG\subset X$ a countable set of vectors. Let $A$ be a bounded linear operator and $\gamma\in\Z_+ \cup \{\infty\}$. Under which conditions is the set $\{(A^*)^{k} g\}_{t\in\{0,1,...,\gamma\}, g\in \calG}$
 a frame for $X$. 
\end{problem}

From this starting point, in later developments, the dynamical sampling community focused, amidst other things, on generalizing the discrete iterations of the operator $A$ to continuous scenarios. In particular, in \cite{rdm-FIACPO} the authors, working with the notion of \textit{continuous frames}, dealt with the problem of determining when the set
\begin{equation}\label{rdm-cont_samples}
\{(A^*)^{t} g\}_{t\in[0,\tau), g\in \calG}   
\end{equation}
  is a \textit{semi-continuous frame} or at least a \textit{complete} or  \textit{Bessel} system. The interval $[0,\tau)$ is allowed to be finite or infinite.  In the previously mentioned article $(A^*)^{t}$ was defined using functional calculus for $A$ a normal operator.
  
  We remark that under special conditions on the operator $A$, the dynamic given by $A^t$ is equivalent to that of $e^{tA}$ which resembles the continuous dynamical systems studied previously. 

  For simplicity of the exposition we recall the definitions of continuous frame and Bessel system.

\begin{definition}\label{rdm-def_cont_frame} (\textit{Continuous frame}\index{Continuous frame})  
    Let $X$ be a complex Hilbert space and let $(\Omega,\mu)$ be a measure space with positive measure $\mu$. A mapping $F:\Omega \rightarrow X$ is called a continuous frame with respect to $(\Omega, \mu)$, if
    \begin{enumerate}
        \item \label{rdm-weak_med} $F$ is \textit{weakly-measurable}, i.e., $\omega \mapsto \langle x, F(\omega)\rangle$ is a measurable function on $\Omega$ for all $x\in X$;
        \item there exist positive constants $c_1$ and $c_2$ such that 
        \begin{equation}\label{rdm-cont_frame_eq}
            c_1\|x\|^2 \leq \int_\Omega \lvert \langle x, F(\omega)\rangle\rvert^2 \  d\mu(\omega)\leq c_2\|x\|^2,\text{ for all } x \in X.
        \end{equation}
    \end{enumerate}
    Here the largest (smallest) constant $c_1$ ($c_2$) is called lower (upper) continuous frame bounds. In addition, $F$ is called a \textit{tight} continuous frame\index{tight frame} if $c_1=c_2$. The mapping $F$ is called \textit{Bessel}\index{Bessel} if the second equality (\ref{rdm-cont_frame_eq}) holds. In this case, $c_2$ is called a Bessel constant. 
\end{definition}

Notice that if $F$ from Definition \ref{rdm-def_cont_frame} satisfies \ref{rdm-weak_med}. and the Bessel condition,  then the map
    \begin{align*}\label{rdm-analysis_op}
        B:X\longrightarrow L^2(\Omega)\\
      \notag  Bx(\omega):=\langle x, F(\omega)\rangle 
    \end{align*}
    is well-defined, linear and bounded. It is called the \textit{analysis operator} of the family $\{F(\omega)\}_{\omega\in\Omega}$. The adjoint operator $B^*$ is called the \textit{synthesis operator} of the family $\{F(\omega)\}_{\omega\in\Omega}$.
        If $F$ is a continuous frame for $X$, then the so called \textit{frame operator} $B^*B$ is invertible.

Since we work with countable sets $\calG$, we can take 
$\Omega=\G\times[0,\tau)$ (resp. $\Omega=\G\times\{0,1,...,\gamma\}$), and $\mu$ as the product of the counting  measure on $\G$ and the Lebesgue measure on $[0,\tau)$ (resp. the counting measure on $\{0,1,...,\gamma\}$). In this case,   $F$  is called a {\em semi-continuous frame} (resp. frame) and equation \eqref{rdm-cont_frame_eq} applied to  (\ref{rdm-cont_samples}) becomes
\begin{equation}\label{rdm-SCF}
c_1\|x\|^2\leq\sum\limits_{g\in\G}\int_0^{\tau}|\langle  x,(A^\ast)^t g\rangle|^2 dt \leq c_2\|x\|^2,\text{ for all } x\in X
\end{equation}
(resp. \eqref{rdm-cont_frame_eq} applied to  \eqref{rdm-disc_sampling_set} becomes \eqref{rdm-frame_cond}).

    \subsection{Relations between the dynamical sampling and observability problem}\label{rdm-connections}\

We will establish the relations between Dynamical Sampling and Control / Observability. To highlight the statements we set them in box form.

Let $X$ be an arbitrary complex separable Hilbert space and $A\in\calB(X)$. 
We first show the connection between the set of vectors $\calG$ from Problem 
\ref{rdm-prob_sampl_disc} in the Dynamical Sampling setting and the observation operator $B$ from equations \eqref{rdm-do} or \eqref{rdm-co}.
Given a countable set of vectors $\G\subset X$, we consider the observation space $Y=\ell^{2}(|\G|)$ and $B: X \to Y$ the analysis operator of  $\calG$, i.e. $Bx:=\left(\langle x, g\rangle\right)_{g\in\G}$.
If $\calG$ is a Bessel set of vectors in $X$, then $B\in\calB(X,Y)$. 
Conversely, let $Y$ be any complex separable Hilbert space and let $B\in\calB(X,Y)$. If we consider $\{e_i\}_{i\in\N}$ an orthonormal basis for $Y$, then we can write
\begin{equation*}
    Bx=\sum_{i\in\N}\langle Bx, e_i\rangle_Y e_i= \sum_{i\in\N}\langle x, B^* e_i\rangle_X e_i, \qquad \forall x\in X.
\end{equation*}
Taking $\G:=\{B^*e_i\}_{i\in\N}$ one may think of $B$ as an operator from $X$ to $\ell^2(\N)$ of the form $x\mapsto\left(\langle x, B^* e_i\rangle\right)_{i\in\N}$. Since $B$ is bounded, $\G$ is a Bessel set of vectors in $X$. 

\bigskip

\fbox{
  \parbox{\textwidth}{
From now on, we think the observation operators $B$ as the analysis operator of a set of sampling vectors $\calG$.
}}

\bigskip

Let $\gamma\in \Z_+\cup\{\infty\}$. The observability map 
$\Psi_\gamma:X\to \ell^2(\{0,1,...,\gamma\},Y)$ of the system \eqref{rdm-do}  takes the form
\begin{eqnarray*}
(\ObsMap_\gamma x_0) (k)&=&BA^kx_0\\
&=&\left(\langle A^kx_0,g\rangle\right)_{g\in\G}\\
&=&\left(\langle x_0,(A^*)^kg\rangle\right)_{g\in\G}.
\end{eqnarray*}

\bigskip

\fbox{
  \parbox{\textwidth}{
Hence, for discrete time,  the \textbf{observability map} $\Psi_\gamma$ associated to the dynamical system \eqref{rdm-do} is the \textbf{analysis operator} of $\{(A^*)^kg\}_{t\in\{0,1,...,\gamma\},g\in \G}$.}}

\bigskip

We have, for all $x\in X$,
   \begin{equation}\label{rdm-norma_Psi_disc}
    \|\Psi_\gamma x\|_{\ell^2(\{0,1,...,\gamma\},Y)}^2=\sum_{k=0}^\gamma \|BA^k x\|^2 =\sum_{k=0}^\gamma \sum_{g\in \G}|\langle x,(A^*)^k g\rangle|^2.
   \end{equation}
Therefore, Problem \ref{rdm-prob_sampl_disc} is the same as deciding whether the pair $(A,B)$ is exactly observable. This is a consequence of equation \eqref{rdm-norma_Psi_disc}: In Definition \ref{rdm-def_obs} the bounded below and above conditions for the observation map $\Psi_\gamma$ are exactly formula \eqref{rdm-frame_cond}.

\bigskip

\fbox{
  \parbox{\textwidth}{
The pair $(A,B)$ associated to the discrete system  \eqref{rdm-do} is \textbf{exactly observable} at time $\gamma$ if and only if the  family of vectors $\{(A^*)^k g\}_{t\in\{0,1,,,,\gamma\}, g\in \G}$ is a \textbf{frame} for $X$.

\noindent The \textbf{observability Grammian}  $Q_\gamma=\ObsMap_\gamma^*\ObsMap_\gamma$  is the \textbf{frame operator} associated to $\{(A^*)^kg\}_{t\in\{0,1,...,\gamma\},g\in \G}$
}}

\bigskip

Moreover, with the exact same argument, $(A,B)$ will be AOB if and only if the set \eqref{rdm-disc_sampling_set} is complete in $X$. 

\bigskip

\fbox{
  \parbox{\textwidth}{
The pair $(A,B)$ associated to the discrete system  \eqref{rdm-do} is \textbf{approximately observable} at time $\gamma$ if and only if the  family of vectors $\{(A^*)^k g\}_{t\in\{0,1,,,,\gamma\}, g\in \G}$ is a \textbf{complete system} in $X$.
}}

\bigskip

It is worth noticing that even though it is not explicitly stated in the Dynamical Sampling problem that the set $\calG$ needs to be Bessel, it is a necessary condition as can be deduced from the upper bound of equation  \eqref{rdm-frame_cond}. Also, this necessary condition implies that the operator $B$ is admissible (see Remark \ref{rdm-admissi_finite_time}).

\bigskip

\fbox{
  \parbox{\textwidth}{
The operator $B$ is \textbf{admissible} for $\{A^k\}_{k\in\Z_+}$ at time $\gamma$ if and only if $\{(A^*)^kg\}_{t\in\{0,1,...,\gamma\},g\in \G}$ is a \textbf{Bessel system} in $X$.
}}

\bigskip

Analogously, let $\tau\in[0,\infty)\cup \{\infty\}$. 
Under the same specifications on the space $Y$ and the operator $B$, the observability map 
$\Psi_\tau:X\to L^2([0,\tau),Y)$  takes the form $$(\ObsMap_\tau x_0) (t)=\left(\langle x_0,e^{tA^*}g\rangle\right)_{g\in\G}$$ and its norm is given by $\|\Psi_\tau x\|^2=\int_0^\tau \sum_{g\in \G}|\langle x,e^{tA^*}g\rangle|^2 dt$. Thus, the exact (resp. approximate) observability conditions on the pair $(A,B)$ in the continuous case \eqref{rdm-co}, turn to be the problem of deciding whether the set $\{e^{tA^*}g\}_{t\in [0,\tau),g\in \G}$ is a continuous frame (resp. complete system).  

\bigskip

\fbox{
  \parbox{\textwidth}{
    The operator $B$ is \textbf{admissible} for $\{e^{tA}\}_{t\in[0,\infty)}$ at time $\tau$ if and only if $\{e^{tA^*}g\}_{t\in[0,\tau),g\in \G}$ is a \textbf{Bessel system} in $X$.\\
    The observability map $\Psi_\tau$ associated to the dynamical system \eqref{rdm-co} is the \textbf{analysis operator} of $\{(e^{tA^*}g\}_{t\in\{0,1,...,\gamma\},g\in \G}$.}}
    
\bigskip
    
    \fbox{
  \parbox{\textwidth}{
    The pair $(A,B)$ associated to the continuous system  \eqref{rdm-co} is \textbf{approximately observable} at time $\tau$ if and only if the  family of vectors $\{e^{tA^*} g\}_{t\in[0,\tau), g\in \G}$ is a \textbf{complete system} in $X$.
    }}
    
    \bigskip
    
    \fbox{
  \parbox{\textwidth}{
    The pair $(A,B)$ associated to the continuous system  \eqref{rdm-co} is \textbf{exactly observable} at time $\tau$ if and only if the family of vectors $\{e^{tA^*} g\}_{t\in[0,\tau), g\in \G}$ is a \textbf{semi-continuous frame} for $X$.\\
    The \textbf{observability Grammian}   $Q_\tau=\ObsMap_\tau^*\ObsMap_\tau$ is the \textbf{frame operator} associated to  $\{e^{tA^*} g\}_{t\in[0,\tau), g\in \G}$.  
     }}
     
\bigskip

\begin{remark}
    The problem of Dynamical Sampling for continuous time was originally proposed for $\{A^t\}$. When $A$ is a self-adjoint and strictly positive operator, the operators $A^t$ coincide with $e^{t\widetilde{A}}$ for an appropriate choice of  $\widetilde{A}\in\calB(X)$, for all $t\geq 0$. When $A$ is not self-adjoint, it is not clear how to find  (or decide if it exists) an appropriate $\widetilde{A}$. However, most results from dynamical sampling for $A^t$ can be adapted to $e^{tA}$. Since our main purpose in this paper is to explore the connections between Control Theory and Dynamical Sampling, we chose to work with the family of vectors 
    \begin{equation*}\label{rdm-cont_samples_exp}
    \{e^{tA^*} g\}_{t\in[0,\tau), g\in \calG} \qquad \text{ instead of } \qquad \{(A^*)^t g\}_{t\in[0,\tau), g\in \calG},
    \end{equation*}
     adjusting the results from \cite{rdm-FIACPO}.  
\end{remark}

\section{A comparison between results from control theory and dynamical sampling}\label{rdm-section 2}

In finite dimension spaces $X$, conditions for exact and approximate observability and the frame condition, for the discrete and continuous problems, have been found in both areas and are  completely understood. We could mention \cite{rdm-cd1} from the Observability perspective  and \cite{rdm-acmt} from the Dynamical Sampling one, among others. Surprisingly, the discrete and continuous cases, as well as the exact and approximate observability conditions, are all the same (cf. \cite{rdm-cd2}). This case is also simple in another way: For finite dimensional spaces, letting the system evolve infinite time is never necessary and adds no value, so only finite time sampling schemes shall be considered.

In infinite dimensional cases, the relation between the different conditions is much more involved. The exact and approximate observability notions are no longer equivalent, the discrete and continuous case have different necessary and sufficient conditions, and sampling infinitely in time can prove to be an advantage in certain cases. 

From the point of view of Control Theory the analysis of all these aspects is quite complete for dynamic operators $A$ having an unconditional basis of eigenvectors. In general the results are proved for continuous scenarios and the analogous results are folklore for the discrete case. We aim to make them explicit every time it is possible. From the point of view of Dynamical Sampling, more things can be added since they only assume the operator to be normal. Moreover, conditions on the discrete case are always explicitly stated. It can be proved that the relations between the continuous and discrete that are true for finite dimensions do not hold anymore in infinite dimensions.

For the first results we consider with operators $A$  having an unconditional basis of eigenvectors. 

\begin{definition}
    A sequence of vectors $\{\varphi_n\}$ in a Hilbert space $X$ is said to be
    \begin{enumerate}
        \item an \textit{unconditional sequence} if there exist positive constants $c_1,c_2$ such that for every finite sequence $(a_n)$ it holds that
        \begin{equation}\label{rdm-uncondition_cond}
            c_1\sum |a_n|^2\|\varphi_n\|^2\leq \|\sum a_n \varphi_n\|^2 \leq c_2\sum |a_n|^2\|\varphi_n\|^2;
        \end{equation}
        \item a \textit{Riesz sequence} if there exist positive constants $c_1,c_2$ such that for every finite sequence $(a_n)$ it holds that
        \begin{equation}\label{rdm-riesz_cond}
            c_1\sum |a_n|^2\leq \|\sum a_n \varphi_n\|^2 \leq c_2\sum |a_n|^2.
        \end{equation}
    \end{enumerate}
    If $\{\varphi_n\}$ is a complete system in $X$ it is called
    \begin{enumerate}
        \item an \textit{unconditional basis} if it satisfies  \eqref{rdm-uncondition_cond};
        \item a \textit{Riesz basis} if it satisfies \eqref{rdm-riesz_cond}.
    \end{enumerate}
\end{definition}

In \cite{rdm-Niko} a fundamental theorem [Theorem 3.3.2] was proved in terms of controllability. We restate it here using the duality between control and observability that has been discussed in \ref{rdm-connections_contr_obs}. It is written in a general way so that it includes discrete and continuous dynamics as well as finite and infinite time. For that reason, we will denote  by $\Psi$ the observability map, and it will be understood that it takes the form of the discrete $\Psi_\gamma$ or continuous $\Psi_\tau$ depending on the context. In this formulation it provides general conditions on exact observability that can then be specialized for each case. Later, in Proposition  \ref{rdm-prop_eq_normas_eq} we will write these specializations explicitly.

\begin{theorem}\label{rdm-cond_nec_suf}
Let $X$ be a complex separable Hilbert space and $A\in\calB(X)$.
    Let $\{\varphi_n\}_{n\geq 1}$ be the eigenvectors of $A$, 
        $$A\varphi_n = -\lambda_n\varphi_n,\qquad n\geq 1,$$
    and suppose that they constitute an unconditional basis in $X$.
    Consider 
    \begin{equation}\label{rdm-En_cont}
      \calE_n := \Psi\varphi_n \qquad n\geq 1.
    \end{equation}
        The pair $(A,B)$ is exactly observable if and only if 
        \begin{enumerate}
            \item $\{\calE_n\}$ is an unconditional sequence  and
            \item $\|\calE_n\| \asymp \|\varphi_n\|$, that is, there exists positive constants $c_1$ and $c_2$ such that $$c_1\|\varphi_n\|\leq\|\calE_n\| \leq c_2 \|\varphi_n\|.$$
        \end{enumerate}
\end{theorem}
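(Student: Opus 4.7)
The plan is to use the fact that $\{\varphi_n\}$ is an unconditional basis to convert two-sided bounds on $\Psi$ into equivalent statements about $\{\mathcal{E}_n\}$. Since the hypothesis ``exactly observable'' tacitly includes admissibility of $B$ (Definition \ref{rdm-def_obs}), exact observability is equivalent to the two-sided estimate $\|\Psi x\|^2 \asymp \|x\|^2$, while the unconditional basis property gives $\|x\|^2 \asymp \sum |a_n|^2 \|\varphi_n\|^2$ whenever $x = \sum a_n \varphi_n$. The linearity and boundedness of $\Psi$ yield $\Psi x = \sum a_n \mathcal{E}_n$ in the target Hilbert space, and this identity is the bridge between the two sides of the statement.

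For the direction ``two conditions $\Rightarrow$ EO'', I would start from any finite linear combination $x = \sum a_n \varphi_n$ and chain
\begin{equation*}
\|\Psi x\|^2 = \Bigl\|\sum a_n \mathcal{E}_n\Bigr\|^2 \asymp \sum |a_n|^2 \|\mathcal{E}_n\|^2 \asymp \sum |a_n|^2 \|\varphi_n\|^2 \asymp \|x\|^2,
\end{equation*}
where the first equivalence is condition (1), the second is condition (2), and the third is the unconditional basis property. The upper bound gives admissibility of $B$, and the lower bound is exactly the bounded-from-below condition defining EO. Extending these inequalities from finite to general $x$ is routine by density and continuity.

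For the converse ``EO $\Rightarrow$ two conditions'', assume $\|\Psi x\|^2 \asymp \|x\|^2$. Testing on $x = \varphi_n$ gives $\|\mathcal{E}_n\|^2 \asymp \|\varphi_n\|^2$, which is (2). Testing on finite sums $x = \sum a_n \varphi_n$ and combining with the unconditional basis bounds yields
\begin{equation*}
\Bigl\|\sum a_n \mathcal{E}_n\Bigr\|^2 \asymp \sum |a_n|^2 \|\varphi_n\|^2 \asymp \sum |a_n|^2 \|\mathcal{E}_n\|^2,
\end{equation*}
which is precisely the definition of $\{\mathcal{E}_n\}$ being an unconditional sequence.

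The argument is essentially a reshuffling of equivalent two-sided bounds, so there is no particularly hard step. The most delicate point is to justify interchanging $\Psi$ with infinite series, which requires the boundedness of $\Psi$ granted by admissibility; the specific form of $\mathcal{E}_n$ (namely $(-\lambda_n)^k B\varphi_n$ in the discrete case, or $e^{-t\lambda_n} B\varphi_n$ in the continuous one) plays no role in this argument, which is what allows the theorem to cover discrete/continuous and finite/infinite horizon uniformly.
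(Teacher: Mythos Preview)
Your proposal is correct and follows essentially the same approach as the paper. The only cosmetic difference is in the converse direction: the paper obtains condition~(1) by invoking the abstract fact that a bounded-below bounded operator (hence an isomorphism onto its range) maps unconditional bases to unconditional sequences, whereas you unpack this fact by testing on finite sums and chaining the two-sided bounds directly; the content is identical.
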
	

\begin{proof}
    ($\textit{2}\Rightarrow \textit{1}$)\,: Since  $\{\varphi_n\}_{n\geq 1}$ is an unconditional basis, for each $x\in X$ we have 
    \begin{equation*}
        x = \sum_{n\geq 1} a_n \varphi_n,  \, \text{ with }
        \|x\|^2 = \sum_{n\geq 1} |a_n|^2 \|\varphi_n\|^2<\infty.
    \end{equation*}
    Then, by the continuity and linearity of $\Psi$
    \begin{align*}
        (\Psi x) (t)  = \sum_{n\geq 1} a_n \calE_n(t).
    \end{align*}
    By hypothesis, we get 
    \begin{equation*}
        \|\Psi x\|^2 =\sum_{n\geq 1} |a_n|^2 \|\calE_n\|^2\asymp \sum_{n\geq 1} |a_n|^2 \|\varphi_n\|^2= \|x\|^2.
    \end{equation*}
   That is, the observability map is bounded from above and below, so $(A,B)$ is EOB. \\
   ($\textit{1}\Rightarrow \textit{2}$)\,: The first statement is a consequence of two facts. One is that the exact observability is equivalent to saying that the observation operator $\Psi$ is an isomorphism on its image. The second, is that topological isomorphisms map unconditional basis into unconditional basis. Therefore, $\Psi$ will map the set $\{\varphi_n\}_{n\geq 1}$ into an unconditional basis of the image of $\Psi$, $\Psi(X)$, and in particular into an unconditional sequence. 
    Thus, $\{\calE_n\}$ is an unconditional sequence.\\
    Moreover, since exact observability also means that $\Psi$ is bounded above and below, there exist constants $c_1$, $c_2 >0$ such that
    \begin{equation*}
        c_1 \|\varphi_n\|^2 \leq 
        \|\Psi\varphi_n\|^2 = \|\calE_n\|^2 \leq c_2 \|\varphi_n\|^2.
    \end{equation*}
\end{proof}

\begin{corollary}  Under the notation of Theorem \ref{rdm-cond_nec_suf}, suppose in addition that $\{\varphi_n\}_{n\geq 1}$ is an Riesz basis in $X$.
    The pair $(A,B)$ is exactly observable if and only if 
    $\{\calE_n\}$ is a Riesz sequence.
\end{corollary}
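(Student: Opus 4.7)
The plan is to reduce the claim directly to Theorem \ref{rdm-cond_nec_suf} by exploiting the fact that, for a Riesz basis, the basis vectors have uniformly bounded norms. This is precisely what collapses the two separate conditions of Theorem \ref{rdm-cond_nec_suf} into the single requirement that $\{\calE_n\}$ be a Riesz sequence. Note first that a Riesz basis is in particular an unconditional basis (the constants $c_1,c_2$ from \eqref{rdm-riesz_cond} absorb the factors $\|\varphi_n\|^2$ since those factors are themselves uniformly bounded above and below), so Theorem \ref{rdm-cond_nec_suf} is applicable under the present hypothesis. Note also that by testing \eqref{rdm-riesz_cond} on sequences $(a_k)$ with a single nonzero entry, every vector of a Riesz basis satisfies $\sqrt{c_1}\le\|\varphi_n\|\le\sqrt{c_2}$; in other words $\|\varphi_n\|\asymp 1$, and the same holds for $\|\calE_n\|$ whenever $\{\calE_n\}$ is itself a Riesz sequence.

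For the implication $(\Leftarrow)$, assume that $\{\calE_n\}$ is a Riesz sequence. Then it is \emph{a fortiori} an unconditional sequence, which is condition~(1) of Theorem \ref{rdm-cond_nec_suf}. Moreover the preceding observation gives $\|\calE_n\|\asymp 1$ and $\|\varphi_n\|\asymp 1$, and combining these two equivalences yields $\|\calE_n\|\asymp\|\varphi_n\|$, which is condition~(2). Theorem \ref{rdm-cond_nec_suf} then delivers that $(A,B)$ is EOB.

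For the implication $(\Rightarrow)$, assume $(A,B)$ is EOB. Theorem \ref{rdm-cond_nec_suf} provides both that $\{\calE_n\}$ is an unconditional sequence, with some constants $c_1',c_2'$ in \eqref{rdm-uncondition_cond}, and that $\|\calE_n\|\asymp\|\varphi_n\|$. Chaining the latter with $\|\varphi_n\|\asymp 1$ gives $\|\calE_n\|\asymp 1$. Substituting these uniform bounds into \eqref{rdm-uncondition_cond} replaces the weights $\|\calE_n\|^2$ by constants, yielding
\[
c_1''\sum|a_n|^2\le\Big\|\sum a_n\calE_n\Big\|^2\le c_2''\sum|a_n|^2
\]
for every finite sequence $(a_n)$, with adjusted constants $c_1'',c_2''>0$. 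This is precisely \eqref{rdm-riesz_cond}, so $\{\calE_n\}$ is a Riesz sequence.

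No step here presents a genuine obstacle; the content of the corollary is essentially bookkeeping once one observes that the Riesz-basis hypothesis forces $\|\varphi_n\|\asymp 1$. The only thing to be mildly careful about is keeping track of how the various constants merge when passing between the weighted unconditional inequality and the constant-weight Riesz inequality, but since every equivalence $\asymp$ that intervenes is uniform in $n$, the constants combine without issue.
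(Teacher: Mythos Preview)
Your argument is correct and is precisely the intended one. The paper states this corollary without proof, treating it as an immediate consequence of Theorem~\ref{rdm-cond_nec_suf}; your write-up simply makes explicit the two observations that justify this---namely that a Riesz basis is an unconditional basis with $\|\varphi_n\|\asymp 1$, and that an unconditional sequence with uniformly equivalent norms is exactly a Riesz sequence---so there is nothing to add.
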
	

\begin{proposition}\label{rdm-prop_eq_normas_eq}
 Under the hypothesis of Theorem \ref{rdm-cond_nec_suf}, $\|\calE_n\| \asymp \|\varphi_n\|$ if and only if 
\begin{itemize}
    \item for continuous and finite time $\calT=[0,\tau)$: 
    \begin{eqnarray}
        &\sup_{n\geq 1} |\Real (\lambda_n)|<\infty \label{rdm-autoval_bounded}, \\
    &\inf_{n\geq 1} \frac{\|B\varphi_n\|}{\|\varphi_n\|}>0  \label{rdm-cond_B_cont_inf1}
    \end{eqnarray}
    \item for continuous and infinite time $\calT=[0,\infty)$: there exist constants $c_1,c_2 >0$ such that for all $n\geq 1$
        \begin{eqnarray}
           &\Real (\lambda_n)>0,  \label{rdm-autoval_neg}\\
            &c_1\sqrt{2\Real(\lambda_n)} \leq  \frac{\|B\varphi_n\|}{\|\varphi_n\|}\leq c_2\sqrt{2\Real(\lambda_n)} \label{rdm-cond_B_cont_inf}
        \end{eqnarray} 
    \item for discrete and infinite time $\calT=\Z_+$: there exist constants $c_1,c_2 >0$ such that for all $n\geq 1$
    \begin{eqnarray}
           &|\lambda_n|<1, \label{rdm-autoval_disco}\\
            &c_1\sqrt{1-|\lambda_n|^2} \leq  \frac{\|B\varphi_n\|}{\|\varphi_n\|}\leq c_2\sqrt{1-|\lambda_n|^2}. \label{rdm-cond_B_disc}
        \end{eqnarray}
\end{itemize}
\end{proposition}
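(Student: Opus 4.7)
The strategy is to exploit the fact that on eigenvectors the operators $e^{tA}$ and $A^k$ act by a scalar, so $\calE_n$ factors into a purely scalar part depending on $\lambda_n$ and the single vector $B\varphi_n$. From $A\varphi_n=-\lambda_n\varphi_n$ one gets $e^{tA}\varphi_n=e^{-t\lambda_n}\varphi_n$ and $A^k\varphi_n=(-\lambda_n)^k\varphi_n$, hence $\calE_n(t)=e^{-t\lambda_n}B\varphi_n$ in the continuous case and $\calE_n(k)=(-\lambda_n)^k B\varphi_n$ in the discrete case. Computing the norm of $\calE_n$ in the relevant $L^2$/$\ell^2$ space factors out $\|B\varphi_n\|^2$ and reduces the problem to analyzing an explicit scalar integral/series $I_n$ depending only on $\Real(\lambda_n)$ or $|\lambda_n|$. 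The condition $\|\calE_n\|\asymp\|\varphi_n\|$ will then translate into a two-sided estimate for the product $\|B\varphi_n\|^2\,I_n$ in terms of $\|\varphi_n\|^2$.

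For the finite continuous case $\calT=[0,\tau)$, I would write
\begin{equation*}
\|\calE_n\|^2 \;=\; \|B\varphi_n\|^2\int_0^\tau e^{-2t\,\Real(\lambda_n)}\,dt \;=\; \|B\varphi_n\|^2\,\frac{1-e^{-2\tau\Real(\lambda_n)}}{2\Real(\lambda_n)},
\end{equation*}
interpreting the quotient as $\tau$ when $\Real(\lambda_n)=0$. The function $r\mapsto\frac{1-e^{-2\tau r}}{2r}$ is continuous and strictly positive on $\R$, so it is bounded above and below by positive constants precisely when $r$ stays in a bounded set. Hence \eqref{rdm-autoval_bounded} makes the integral factor $\asymp 1$, and the desired equivalence collapses to $\|B\varphi_n\|\asymp\|\varphi_n\|$; the upper bound is automatic because $B$ is bounded, and the lower bound is exactly \eqref{rdm-cond_B_cont_inf1}. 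For the converse, boundedness of $A$ already forces $|\Real(\lambda_n)|\le\|A\|$, after which the same identity gives \eqref{rdm-cond_B_cont_inf1} from $\|\calE_n\|\asymp\|\varphi_n\|$.

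For the infinite continuous case $\calT=[0,\infty)$ the integral $\int_0^\infty e^{-2t\Real(\lambda_n)}\,dt$ is finite if and only if $\Real(\lambda_n)>0$, and in that case equals $(2\Real(\lambda_n))^{-1}$; otherwise $\|\calE_n\|=+\infty$ whenever $B\varphi_n\neq 0$, which would contradict $\|\calE_n\|\asymp\|\varphi_n\|$ (and $B\varphi_n=0$ would force $\|\calE_n\|=0\neq\|\varphi_n\|$). So \eqref{rdm-autoval_neg} is necessary, and once it holds, $\|\calE_n\|^2 = \|B\varphi_n\|^2/(2\Real(\lambda_n))$; the equivalence $\|\calE_n\|\asymp\|\varphi_n\|$ then is literally \eqref{rdm-cond_B_cont_inf}. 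The discrete infinite-time case is completely parallel: $\|\calE_n\|^2 = \|B\varphi_n\|^2\sum_{k=0}^\infty |\lambda_n|^{2k}$ is finite iff $|\lambda_n|<1$, giving \eqref{rdm-autoval_disco}, and then equals $\|B\varphi_n\|^2/(1-|\lambda_n|^2)$, so the equivalence becomes \eqref{rdm-cond_B_disc}.

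The only step that requires care is the finite continuous case, where one must argue that a continuous positive function of $\Real(\lambda_n)$ is actually bounded away from $0$ and $\infty$ uniformly in $n$; this rests on the elementary but essential observation that a positive continuous function on a bounded set attains positive extrema, combined with the fact that $A\in\calB(X)$ automatically keeps $\Real(\lambda_n)$ in a bounded interval. Once that is in place, each of the three cases is a direct algebraic rearrangement of the explicit formula for $\|\calE_n\|^2$.
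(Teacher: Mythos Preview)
Your proof is correct and follows essentially the same approach as the paper: compute $\|\calE_n\|^2$ explicitly by using that $\varphi_n$ is an eigenvector, factor out $\|B\varphi_n\|^2$ times a scalar integral or series in $\lambda_n$, and then read off the conditions. The only minor difference is that in the finite-time case you invoke $|\Real(\lambda_n)|\le\|A\|$ directly from boundedness of $A$, whereas the paper derives the eigenvalue bound from the inequality itself (a choice the paper makes so that the argument also covers unbounded generators, as it remarks immediately after the proposition).
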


Condition \eqref{rdm-autoval_bounded} is trivially satisfied if $A$ a bounded operator. In general this proposition and the last theorem still hold for closed operators with almost the same proof. 

\begin{proof} 
    We start the proof for the continuous finite case. 
    Let us begin assuming  $\|\calE_n\| \asymp \|\varphi_n\|$. That is, there exist $c_1,c_2>0$, independent of $n$, such that
    \begin{align}\label{rdm-in_proof_eq_1}
        c_1 \|\varphi_n\|^2 \leq 
        \|\calE_n\|^2_{L^2([0,\tau),Y)} = \frac{e^{-2\Real(\lambda_n) \tau} - 1}{-2\Real(\lambda_n)} \|B\varphi_n\|^2 \leq c_2 \|\varphi_n\|^2.
    \end{align}
    Using that $B$ is bounded and the left inequality we conclude
    \begin{align*}
        c_1 \leq 
         \frac{e^{-2\Real(\lambda_n) \tau} - 1}{-2\Real(\lambda_n)} \|B\|^2.
    \end{align*}
    Therefore, $\sup_{n\geq 1}\{\Real(\lambda_n)\}< \infty$. Since  $\|e^{tA}\|\leq e^{t\|A\|}$, we also know that $$\sup_{n\geq 1}\{\Real(-\lambda_n)\}< \infty.$$ These proves that $\sup_{n\geq 1}|\Real(\lambda_n)|<\infty$. Using this and the left bound again we obtain,
    \begin{align*}
        c_1 \|\varphi_n\|^2 \leq 
         \frac{e^{-2\Real(\lambda_n) \tau} - 1}{-2\Real(\lambda_n)} \|B\varphi_n\|^2 \leq K \|B\varphi_n\|^2,
    \end{align*}
    where $K$ is a positive constant. Therefore
    \begin{equation*}
       \inf_{n\geq1}\frac{\|B\varphi_n\|}{\|\varphi_n\|}>0. 
    \end{equation*}
    The reverse implication is straightforward noticing that there exist nonzero positive constants $c_1, c_2$ and $d_1, d_2$ such that 
    \begin{align*}
        c_1\|\varphi_n\|\leq \|B\varphi_n\|\leq c_2 \|\varphi_n\|, \qquad \text{and }\\
        d_1 < \frac{e^{-2\Real(\lambda_n) \tau} - 1}{-2\Real(\lambda_n)} <d_2, \qquad \text{for all } n.
    \end{align*}
    Consequently, \eqref{rdm-in_proof_eq_1} holds.

    The second and the third items are easy to prove. Just consider that the analogous of \eqref{rdm-in_proof_eq_1} are
    \begin{equation*}
        c_1 \|\varphi_n\|^2 \leq 
        \|\calE_n\|^2_{L^2([0,\infty),Y)} = \left(\int_{0}^\infty e^{-2\Real(\lambda_n)t}dt\right) \|B\varphi_n\|^2 \leq c_2 \|\varphi_n\|^2,
    \end{equation*}
    and
    \begin{equation*}
        c_1 \|\varphi_n\|^2 \leq 
        \|\calE_n\|^2_{\ell^2(\Z_+,Y)} = \left(\sum_{k=0}^\infty |\lambda_n|^{2k}\right) \|B\varphi_n\|^2 \leq c_2 \|\varphi_n\|^2.
    \end{equation*}
    Then, for the first case,  $e^{-2\Real(\lambda_n)t}\in L^2([0,\infty))$ for all $n\geq 1$ so  $\Real(-\lambda_n)<0$ and $\int_{0}^\infty e^{-2\Real(\lambda_n)t}dt=\frac{1}{2\Real(\lambda_n)}$. This implies  \eqref{rdm-cond_B_cont_inf}. For the second case, the geometric series converges if and only if $|\lambda_n|<1$. Therefore \eqref{rdm-cond_B_disc} must hold. The reverse implications are trivial.
\end{proof}

\begin{remark} Suppose that we are under the hypothesis of Theorem \ref{rdm-cond_nec_suf}. The exact observability implies the following necessary conditions on the point spectrum $\sigma_p(A)$ of the dynamic operator $A$:
    \begin{itemize} 
    \item For continuous and finite time $\calT=[0,\tau)$,  there exist $\alpha>0$ such that
    \begin{equation*}
    \sigma_p(A)\subset\{\lambda\in\C: \, -\alpha<\Real(\lambda)<\alpha\}.    
    \end{equation*}
    \item For continuous and infinite time $\calT=[0,\infty)$, there exist $\alpha>0$ such that
    \begin{equation*}
    \sigma_p(A)\subset\{\lambda\in\C: \, -\alpha<\Real(\lambda)<0\}.    
    \end{equation*} 
    \item  For discrete and infinite time $\calT=\Z_+$, 
    \begin{equation*}
    \sigma_p(A)\subset\{\lambda\in\C: \, |\lambda|<1\}=:\mathbb{D}.    
    \end{equation*} 
    \end{itemize}
\end{remark}

In the remainder of this subsection we will separate the finite-time case from the infinite-time.

\subsection{Finite  time}\label{rdm-Finite_Continuous_Time}\

We will consider, unless other specifications, pairs of the form $(A,B)$ associated to dynamical systems as \eqref{rdm-co} for $0<\tau<\infty$.
We mention that the results from Dynamical Sampling that we will cite are proved in the literature for evolution operators of the form $\{A^t\}_{t\in[0,\tau)}$ instead of $\{e^{tA}\}_{t\in[0,\tau)}$. We will enunciate them using the dynamics given by $\{e^{tA}\}_{t\in[0,\tau)}$ to show the relation with Control Theory. We recall that, in  general, it is not true that $e^{tA} = (e^A)^t$, so the adaptations are not immediate.  As a rule of thumb, most results will follow just by assuming that it is true, the technical details and full proofs of this facts should be considered in each case and are going to be presented in \cite{rdm-DDM20}. 

Some considerations have to be mentioned. Since $e^{tA}$ is always defined for bounded operators (even for non normal ones), the normality hypothesis that is often required in Dynamical Sampling is not further necessary. In a similar fashion, since $e^{tA}$ is always invertible, the invertibility of $A$ will not be required.  

Taking all this into account, we will present most results written as similar to the original ones as possible.

From Theorem \ref{rdm-cond_nec_suf}, we know that exact observability results can be derived from checking whether $\{\calE_n\}$ is an unconditional sequence of the corresponding space. In general, the conditions for that to happen for finite time require results that exceed the aim of this presentation. The interested reader can find results in this direction in \cite{rdm-ai1,rdm-ai2,rdm-Niko3,rdm-Niko2,rdm-Niko}. However,  without appealing to this condition,s still several things can be said for finite time observability.  

In particular, one desires the amount of sampling points (both spatial and temporal) to be as \textit{small} as possible. We compile some theorems that claim something in this regard. 

The first result establishes that every time one can recover the initial state of the system by sampling continuously during a finite time, one can choose to sample intermittently in time without losing information or stability. We could call this process as a discretization in the sampling scheme or observability problem. Notice that this gives no relation between the discrete and continuous problem. It is just a middle ground where one still has a system evolving continuously but samples discretely. 
The following result is almost verbatim of \cite[Theorem 5.4]{rdm-FIACPO} when changing $A^t$ to $e^{tA}$.

\begin{theorem}\label{rdm-ScToDscr}
	Let $X$ be a complex separable Hilbert space, $A\in\calB(X)$ and    $\G$  a Bessel system of vectors in $\ObsDynSp$. The following are equivalent.
	\begin{enumerate}
	    \item $\{e^{t\ObsAd{\ObsDynOp}}g \}_{g\in\G,t\in[0,\tau)}$ is a semi-continuous frame for $X$.
	    \item There exists $\delta>0$ such that for any finite set $T=\{t_i:i=1,\ldots,n\}$ with $0=t_1< t_2<\ldots<t_n\leq t_{n+1}=\tau$ and $|t_{i+1}-t_{i}|<\delta$ for all $i\in\{1,...,n\}$,  the system $\{e^{t\ObsAd{\ObsDynOp}}g\}_{g\in \G,t\in T}$ is a frame for $\ObsDynSp$.
	    \item There exists a finite partition $T=\{t_i:i=1,\ldots,n\}$ of $[0,\tau)$ with $0= t_1< t_2<\ldots<t_n\leq \tau$  such that $\{e^{t\ObsAd{\ObsDynOp}}g \}_{g\in\G,t\in T}$ is a frame for $\ObsDynSp$.
	\end{enumerate}
\end{theorem}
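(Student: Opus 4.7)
The plan is to establish the cycle $(2)\Rightarrow(3)\Rightarrow(1)\Rightarrow(2)$; the implication $(2)\Rightarrow(3)$ is trivial since any equispaced partition of $[0,\tau)$ with mesh less than $\delta$ witnesses~(3). The main tool for the two nontrivial implications is the \emph{uniform operator-norm continuity} of the map $T\colon[0,\tau]\to\calB(X,Y)$, $T(s):=Be^{sA}$. Because $A\in\calB(X)$, this map is in fact real-analytic in operator norm, so on the compact interval $[0,\tau]$ it has a finite modulus of continuity
\begin{equation*}
\omega(\delta):=\sup\bigl\{\|T(s)-T(t)\|_{\calB(X,Y)}:\ s,t\in[0,\tau],\ |s-t|\le\delta\bigr\},\qquad \omega(\delta)\xrightarrow{\delta\to0^{+}}0.
\end{equation*}
Setting $F(s):=\|T(s)x\|_{Y}^{2}=\sum_{g\in\calG}|\langle x,e^{sA^{*}}g\rangle|^{2}$, condition (1) reads $c_{1}\|x\|^{2}\le\int_{0}^{\tau}F(s)\,ds\le c_{2}\|x\|^{2}$ and condition (3) reads $c_{1}'\|x\|^{2}\le\sum_{i=1}^{n}F(t_{i})\le c_{2}'\|x\|^{2}$. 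With $M:=\sup_{s\in[0,\tau]}\|T(s)\|<\infty$, a straightforward polarization argument gives the key pointwise estimate $|F(s)-F(t_{i})|\le 2M\omega(|s-t_{i}|)\|x\|^{2}$, uniform in $x$.

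For $(1)\Rightarrow(2)$, I choose $\delta>0$ small enough that $2\tau M\omega(\delta)<c_{1}/2$, and for any admissible partition with mesh below $\delta$ compare the integral to the (left) Riemann sum:
\begin{equation*}
\Bigl|\int_{0}^{\tau}F(s)\,ds-\sum_{i=1}^{n}(t_{i+1}-t_{i})F(t_{i})\Bigr|\le\sum_{i=1}^{n}\int_{t_{i}}^{t_{i+1}}|F(s)-F(t_{i})|\,ds\le 2\tau M\omega(\delta)\|x\|^{2}<\tfrac{c_{1}}{2}\|x\|^{2}.
\end{equation*}
Hence $\sum_{i}(t_{i+1}-t_{i})F(t_{i})\ge(c_{1}/2)\|x\|^{2}$, and since each weight is at most $\delta$, $\sum_{i}F(t_{i})\ge(c_{1}/(2\delta))\|x\|^{2}$. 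The upper frame bound $\sum_{i}F(t_{i})\le nM^{2}\|x\|^{2}$ is immediate (the constant may depend on the partition, which is allowed).

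For $(3)\Rightarrow(1)$, the upper integral bound $\int_{0}^{\tau}F(s)\,ds\le \tau M^{2}\|x\|^{2}$ is automatic from admissibility (Remark~\ref{rdm-admissi_finite_time}). For the lower bound, I use $\|a+b\|^{2}\ge\tfrac{1}{2}\|a\|^{2}-\|b\|^{2}$ with $a=T(t_{i})x$, $b=(T(s)-T(t_{i}))x$ to obtain
\begin{equation*}
F(s)\ge\tfrac{1}{2}F(t_{i})-\omega(|s-t_{i}|)^{2}\|x\|^{2}.
\end{equation*}
I then fix $r>0$ smaller than $\min_{i}(t_{i+1}-t_{i})/2$ and smaller than $\tau-t_{n}$ (so that the intervals $J_{i}:=[t_{i},t_{i}+r]\subset[0,\tau]$ are pairwise disjoint), and with $r$ further shrunk so that $2n\omega(r)^{2}<c_{1}'/2$. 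Integrating the pointwise bound over $J_{i}$ and summing gives $\int_{0}^{\tau}F\ge\tfrac{r}{2}\sum_{i}F(t_{i})-nr\omega(r)^{2}\|x\|^{2}\ge(rc_{1}'/4)\|x\|^{2}$, which is the desired semi-continuous frame lower bound.

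The main obstacle is not conceptual but technical: all the estimates must be uniform in $x\in X$, and the linchpin that delivers this uniformity is the operator-norm continuity of $s\mapsto T(s)$ (as opposed to merely strong continuity), which would not in general be available for unbounded generators. A secondary nuisance is the boundary effect in $(3)\Rightarrow(1)$ when a node $t_{i}$ lies near $\tau$ or when nodes cluster, both of which are absorbed by choosing $r$ smaller than the minimum spacing of the fixed partition.
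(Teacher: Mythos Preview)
Your argument is correct. The paper itself does not supply a proof of this theorem: it states that the result is an adaptation of \cite[Theorem~5.4]{rdm-FIACPO} (written there for the semigroup $A^t$ rather than $e^{tA}$) and defers the details to the forthcoming work \cite{rdm-DDM20}. Your Riemann-sum approach, driven by the uniform operator-norm continuity of $s\mapsto Be^{sA}$ on the compact interval $[0,\tau]$, is exactly the natural mechanism here, and it is consistent with the paper's remark that the argument can be carried out without using the inner product of $X$ --- indeed, your estimates are all at the level of the observation map $T(s)$ and would go through verbatim in a Banach-space setting.

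One small clean-up: in $(3)\Rightarrow(1)$ you require $r<\tau-t_n$, which fails when $t_n=\tau$ (a case allowed by the statement). The fix is trivial --- take $J_n$ to the left of $t_n$ rather than to the right, using continuity of $F$ up to $\tau$ --- but it is worth stating explicitly rather than leaving it under the ``secondary nuisance'' umbrella.
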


Last theorem and its corollary can be further generalized for non Hilbert spaces in the context of the observability problem. Its proof follows the arguments given in \cite{rdm-FIACPO} but without requiring the inner product of the space $X$. In particular,  one can work with a $B$ that is not the analysis operator of any set of vectors. The proofs of these facts will be presented in \cite{rdm-DDM20}.

As a conclusion, when in presence of systems that are exactly observable in finite time, there is no necessity to sample continuously. As a side product,  one can also conclude that when we have observable systems in finite time, it is necessary to sample at an infinite quantity of spatial points. (Otherwise, the discretized version will be a finite set of vectors and it can never be a frame.) That is obviously a disadvantage that we will try to solve recurring to a trade-off between infinite space and infinite time sampling schemes. The following corollary is an adaptation of \cite[Corollary 5.3]{rdm-FIACPO}.

\begin{corollary}\label{rdm-coro5.3_rdm-FIACPO} Let $X$ be  of infinite dimension Hilbert space and let $A\in \calB(X)$. If $\{e^{tA^*}g\}_{g\in\G,t\in[0,\tau)}$ is a frame for $X$, then $|\G|=+\infty $.
\end{corollary}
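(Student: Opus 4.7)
The plan is to argue by contradiction using Theorem \ref{rdm-ScToDscr} as the main tool. Assume $\{e^{tA^*}g\}_{g\in\G,t\in[0,\tau)}$ is a semi-continuous frame for $X$ and suppose, for contradiction, that $|\G|<\infty$.

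First, I would apply the implication $(1)\Rightarrow (3)$ of Theorem \ref{rdm-ScToDscr}: since the family is a semi-continuous frame, there exists a finite partition $T=\{t_1,\ldots,t_n\}$ of $[0,\tau)$ such that the discretized family $\{e^{tA^*}g\}_{g\in\G,t\in T}$ is a (discrete) frame for $X$. Under the contradiction hypothesis, this discrete frame has at most $n\cdot|\G|<\infty$ elements.

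Next, I would invoke the standard fact that any frame for a Hilbert space $X$ is in particular a complete system (since the lower frame bound forces $\langle x, f\rangle = 0$ for all frame elements $f$ to imply $x=0$). A finite subset of $X$ cannot be complete in an infinite-dimensional Hilbert space, because its linear span is a finite-dimensional, hence closed, proper subspace, whose orthogonal complement is nontrivial. This contradicts $\dim X = \infty$, so $|\G|=+\infty$.

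I do not anticipate a genuine obstacle here: the whole content is packaged inside Theorem \ref{rdm-ScToDscr}. The only mildly delicate point is to state cleanly why a finite family cannot be a frame in infinite dimension, which is an immediate consequence of the lower frame inequality together with basic linear algebra.
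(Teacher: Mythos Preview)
Your proposal is correct and matches the paper's own reasoning essentially verbatim: the paper derives this corollary as an immediate ``side product'' of Theorem~\ref{rdm-ScToDscr}, noting that otherwise the discretized version would be a finite set of vectors, which can never be a frame for an infinite-dimensional space. The only hypothesis of Theorem~\ref{rdm-ScToDscr} you do not explicitly verify is that $\G$ is Bessel, but this is automatic under your contradiction assumption $|\G|<\infty$.
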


Here is another point where both communities have reached similar results. Without explicitly recurring to the discretization of the sampling process, a similar result was also established in \cite[Theorem 1.3.19]{rdm-Niko} for the controllability problem. We rewrite it to be directly readable for the observability problem. Its original version  should be translated using the duality relationship of Theorem \ref{rdm-connection_control_obs}.

\begin{theorem}\label{rdm-Theorem 1.3.19Niko}
Let $X$ be an infinite dimension Hilbert space and let $A\in \calB(X)$. If the observation  operator $B\in\calB(X,Y)$ is compact, then the pair $(A,B)$  cannot be exactly observable at any moment $\tau<\infty$. In particular, if $B$ is of finite rank, then the pair $(A,B)$ cannot be exactly observable at any moment $\tau<\infty$.  
\end{theorem}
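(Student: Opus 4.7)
My plan is to show that compactness of $B$ forces the observability map $\Psi_\tau$ itself to be a compact operator from $X$ into $L^2([0,\tau),Y)$, and then to deduce non-observability by a Riesz-type argument.

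First I would establish compactness of $\Psi_\tau$. Take a bounded sequence $\{x_n\}\subset X$; passing to a subsequence I may assume $x_n\rightharpoonup x$ weakly. For each fixed $t\in[0,\tau)$, boundedness of $e^{tA}$ gives $e^{tA}x_n\rightharpoonup e^{tA}x$, and then compactness of $B$ yields $Be^{tA}x_n\to Be^{tA}x$ in norm in $Y$. Hence the integrand $\|Be^{tA}(x_n-x)\|_Y^2$ tends to zero pointwise in $t$, and it is dominated uniformly on $[0,\tau)$ by a constant, since weakly convergent sequences are bounded and Remark \ref{rdm-admissi_finite_time}(2) supplies the crude estimate $\|Be^{tA}y\|\leq \|B\|e^{\|A\|\tau}\|y\|$. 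Dominated convergence then yields
\begin{equation*}
\|\Psi_\tau x_n-\Psi_\tau x\|_{L^2([0,\tau),Y)}^2=\int_0^\tau \|Be^{tA}(x_n-x)\|_Y^2\,dt\longrightarrow 0.
\end{equation*}
Thus $\Psi_\tau$ sends weakly convergent sequences to norm convergent ones, and is therefore compact.

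Now suppose for contradiction that $(A,B)$ is exactly observable at time $\tau$. By Definition \ref{rdm-def_obs}, there is $c>0$ with $\|\Psi_\tau x\|\geq c\|x\|$ for all $x\in X$. For any bounded sequence $\{x_n\}\subset X$, compactness of $\Psi_\tau$ produces a norm-Cauchy subsequence $\{\Psi_\tau x_{n_k}\}$, and the lower bound $\|x_{n_k}-x_{n_\ell}\|\leq c^{-1}\|\Psi_\tau x_{n_k}-\Psi_\tau x_{n_\ell}\|$ forces $\{x_{n_k}\}$ itself to be Cauchy, hence convergent, in $X$. So the closed unit ball of $X$ is sequentially compact, and Riesz's theorem gives $\dim X<\infty$, contradicting the hypothesis. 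The in-particular statement is immediate, since finite-rank operators are compact.

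The only technical hurdle is the compactness of $\Psi_\tau$; once that is in hand the deduction is textbook. Bounding the integrand to invoke dominated convergence is the single step requiring care, and it is painless because $A$ is bounded and the interval $[0,\tau)$ has finite measure. An alternative route would be to argue that the observability Grammian $Q_\tau=\int_0^\tau e^{tA^*}B^*Be^{tA}\,dt$ is a Bochner integral of a norm-continuous family of compact operators, hence compact and positive; a strictly positive compact operator on an infinite-dimensional space cannot be bounded below, which would then contradict Proposition \ref{rdm-equiv_eob}(3). I prefer the first route as it avoids integration in the Banach space of operators.
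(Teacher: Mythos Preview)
Your argument is correct. The compactness of $\Psi_\tau$ follows cleanly from the dominated convergence step you describe (the bound $\|Be^{tA}y\|\le\|B\|e^{\|A\|\tau}\|y\|$ on the finite interval suffices), and once $\Psi_\tau$ is compact the Riesz argument rules out any lower bound on an infinite-dimensional $X$.

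Note, however, that the paper does not supply its own proof of this statement: it simply imports \cite[Theorem~1.3.19]{rdm-Niko}, which is formulated for the controllability problem, and appeals to the duality of Theorem~\ref{rdm-connection_control_obs} to translate it into the observability language. Your route is therefore genuinely different in that it is direct and self-contained: you work with $\Psi_\tau$ on the observation side rather than passing to the adjoint pair $(A^*,B^*)$ and the controllability map. What the paper's approach buys is brevity (one citation plus one duality); what yours buys is that the reader sees the mechanism (compactness propagates from $B$ to $\Psi_\tau$, and compact injections cannot exist in infinite dimension) without consulting an external reference. Your alternative Grammian route is also valid and is in fact closer in spirit to how such results are often proved in the control literature, since $Q_\tau$ lives on $X$ rather than in the vector-valued $L^2$ space.
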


Another important question beyond the discretization problem is how much time one has to sample to have observability. In this direction one can analyze the dependence on time of three different properties of systems $(A,B)$. The first one is the admissibility condition on $B$, the second and third are the approximate and exact observability of the pair. 

It can be proved that  the admissibility condition on $B$ is independent of the sampling time. We have the following theorem which is inspired by  \cite[Theorem 4.3]{rdm-FIACPO} but without require that $A$ is normal.

\begin{theorem}\label{rdm-Teo4.3rdm-FIACPO}
 	Let $X$ be a complex separable Hilbert space and $A\in\calB(X)$. A countable set of vectors $\G\subset X$ is a Bessel system if and only if, for any number $\tau$, $\{e^{tA^*}g\}_{g\in\G,t\in[0,\tau)}$ is a Bessel system in $X$.  
	Equivalently, a linear map $B:X\to Y$ is continuous if and only if it is admissible for $\{e^{tA}\}$ at some finite time $\tau$.
\end{theorem}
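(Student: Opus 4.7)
My plan is to reduce the statement, via the dictionary established in Section~\ref{rdm-connections}, to proving that for a countable $\G\subset X$,
\[
\sum_{g\in\G}|\langle x,g\rangle|^2 \leq K\|x\|^2 \ \forall x \iff \int_0^\tau \sum_{g\in\G}|\langle e^{tA}x,g\rangle|^2\,dt \leq C\|x\|^2 \ \forall x,
\]
since writing $Bx=(\langle x,g\rangle)_{g\in\G}$ identifies Bessel with continuity of $B$ and the integrated inequality with admissibility. One direction then passes routinely through the exponential estimate, and the other requires a truncation-plus-absorption argument.

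For the easy direction, assume $\G$ is Bessel with constant $c_2$. Using $\langle x,e^{tA^*}g\rangle=\langle e^{tA}x,g\rangle$ and applying the Bessel bound to the vector $e^{tA}x$,
\[
\sum_{g\in\G}|\langle e^{tA}x,g\rangle|^2 \leq c_2\|e^{tA}x\|^2 \leq c_2\, e^{2t\|A\|}\|x\|^2.
\]
Integration over $[0,\tau)$ yields a Bessel constant $c_2(e^{2\tau\|A\|}-1)/(2\|A\|)$ for every finite $\tau$, which is exactly the forward implication (and covers the ``for any $\tau$'' version).

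For the converse, fix $\tau$ and $C$ witnessing the integrated bound; the goal is to show $\G$ is Bessel. The obstruction is that $B$ is not a priori globally defined on $X$, so I cannot work with $B$ directly. The key move is to restrict to arbitrary finite subsets $\G_0\subset\G$: the analysis operator $B_{\G_0}\colon X\to\ell^2(\G_0)$ is trivially bounded, and the game is to bound $\|B_{\G_0}\|$ uniformly in $\G_0$. Applying the elementary inequality $\|a\|^2\leq 2\|b\|^2+2\|a-b\|^2$ with $a=B_{\G_0}x$, $b=B_{\G_0}e^{tA}x$, and integrating over $t\in[0,\delta]$ with $\delta\in(0,\tau)$,
\[
\delta\|B_{\G_0}x\|^2 \leq 2\int_0^\delta \|B_{\G_0}e^{tA}x\|^2\,dt + 2\|B_{\G_0}\|^2\|x\|^2\, M(\delta),
\]
where $M(\delta):=\int_0^\delta\|e^{tA}-I\|^2\,dt$. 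The first integral is dominated by $C\|x\|^2$ by monotonicity in $\G$ and the standing hypothesis. For the second, uniform continuity of the semigroup at $0$ (which holds because $A$ is bounded) gives $\|e^{tA}-I\|\leq e^{t\|A\|}-1=O(t)$, so $M(\delta)=O(\delta^3)$ and in particular $M(\delta)/\delta\to 0$ as $\delta\to 0^+$.

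Choosing $\delta>0$ small enough (depending only on $\|A\|$, not on $\G_0$ or $x$) that $2M(\delta)\leq\delta/2$ absorbs the last term on the left, yielding $\|B_{\G_0}\|^2\leq 4C/\delta$. Taking monotone suprema over finite $\G_0\uparrow\G$ produces $\sum_{g\in\G}|\langle x,g\rangle|^2\leq (4C/\delta)\|x\|^2$, so $\G$ is Bessel.

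The main obstacle is exactly the $\G_0$-independence of the bound: an integrated $L^2$ estimate on $[0,\tau)$ does not a priori control any pointwise value of the function $t\mapsto\sum_g|\langle e^{tA}x,g\rangle|^2$, and the truncation is needed to legitimately test the inequality against $\|B_{\G_0}\|^2$ on the right-hand side before absorbing it. The quantitative rate $\|e^{tA}-I\|=O(t)$ provided by the bounded generator is the ingredient that makes the absorption possible; without it this ``small-time'' step would fail.
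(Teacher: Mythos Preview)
Your proof is correct and takes a genuinely different route from the paper's. For the converse, the paper introduces the averaged operator $T x:=\int_0^{\tau} e^{tA}x\,dt$ and shows, via the holomorphic functional calculus, that $T=g(\tau,A)$ with $g(t,z)=(e^{tz}-1)/z$; since $g(\tau,\cdot)$ is nonvanishing on $\sigma(A)$ for $\tau$ small, $T$ is invertible. A H\"older step then gives $\sum_{g\in\G}|\langle Tx,g\rangle|^2\leq \tau\int_0^\tau \|Be^{tA}x\|^2\,dt\leq \tau C\|x\|^2$, and boundedness of $B$ follows from invertibility of $T$. Your argument instead works with finite truncations $\G_0$, uses the triangle-type inequality $\|B_{\G_0}x\|^2\leq 2\|B_{\G_0}e^{tA}x\|^2+2\|B_{\G_0}\|^2\|e^{tA}-I\|^2\|x\|^2$, integrates, and absorbs via $\|e^{tA}-I\|=O(t)$. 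Both approaches lean on the same ingredient---norm continuity of $t\mapsto e^{tA}$ coming from boundedness of $A$---but in different guises: you use the quantitative rate to make $M(\delta)/\delta$ small, while the paper uses it implicitly through the power-series identity $T=g(\tau,A)$. Your route is more elementary (no spectral mapping) and makes the domain issue for $B$ completely transparent through the $\G_0$ truncation; the paper's route is shorter once the functional calculus is in hand and yields the explicit inverse $T^{-1}$ as the recovery map.
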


\begin{proof}
    By Remark \ref{rdm-admissi_finite_time},  we have that if $B\in \calB(X,Y)$, then it is admissible at any finite time.
    
    For the converse, let $\tau_0<\infty$ such that $B$ is admissible for  $\{e^{tA}\}$ at $\tau_0$. If $\tau_0=0$ there is nothing to prove.
    For $\tau_0>0$, it is immediate that $B$ is admissible for $\{e^{tA}\}$ at any time $0\leq \tau\leq\tau_0$. 
    
    For $\tau$ sufficiently small (smaller than $\tau_0$),  consider  $T:X\to X$ defined by,
    \begin{equation*}\label{rdm-Besselell}
    T x:=\int_{0}^{\tau}e^{tA}x \, dt.
    \end{equation*}
    Then, $T\in\mathcal{B}(X)$ since $\tau<\infty$.
    Thus, if we prove that $T$ is invertible, $B$ is bounded if and only if $BT$ is.  
    
    To prove that $T$ is invertible, consider, for $t>0$ and $z\in\C$, the  function $g(t,z):=\frac{e^{t z}-1}{z}=\sum_{k=1}^\infty \frac{t^kz^{k-1}}{k!}$. Notice that $g$ is analytic as a function of $z$, and if $t>0$ is sufficiently small it is non-vanishing for all $z\in \sigma(A)$ (the spectrum of $A$). We claim that $T x=g(\tau,A)x$ for all $x\in X$. In that case, $\sigma(T)=g(\tau,\sigma(A))$, and $0\not\in \sigma(T)$ if $\tau$ is chosen sufficiently small. This implies that, for such $\tau$, $T$ is invertible.
    
    To see the claim, by the fundamental theorem of calculus for Bochner integrals applied to $g$ considered as a function of $t$, we have
    \begin{equation*}
        g(\tau,A)x=g(\tau,A)x-g(0,A)x =\int_0^\tau \frac{d}{dt}g(t,A)x \, dt = \int_0^\tau e^{tA}x \, dt.
    \end{equation*}
    Here, in order to calculate the derivative we used the series expansion of $g(t,A)$.
    
    Since $X$ is a Hilbert space, $B$ can be viewed as the analysis operator of a set of vectors $\G$. Using the properties of Bochner integrals and  H\"older inequality, we obtain
    \begin{eqnarray*}
    \|BT x\|^2&=& \sum |\langle Tx,g\rangle|^2= \sum_{g\in\G}\left|\left\langle \int_0^\tau e^{tA}x \, dt, g  \right\rangle\right|^2\\
    &=& \sum_{g\in\G}\left|\int_0^\tau\langle e^{tA}x,  g\rangle \, dt\right|^2
    \leq \sum_{g\in\G}\tau \int_0^\tau|\langle e^{tA}x,  g\rangle|^2dt\\
    &=& \tau \int_0^\tau\sum_{g\in\G}|\langle e^{tA}x,  g\rangle|^2dt
    =\tau \int_0^\tau\|Be^{tA}x\|^2dt
    \leq \tau C \|x\|^2.
    \end{eqnarray*}
\end{proof}

Approximate observability is also independent of time, this was proved by Kalman (see \cite[Theorem 1.3.1]{rdm-Niko}). As before, we enunciate it in the context of observability using Theorem \ref{rdm-connection_control_obs}.

\begin{theorem}\cite[Theorem 1.3.1]{rdm-Niko}\label{rdm-Theorem 1.3.1Niko} \index{Kalman's Theorem}\index{Theorem! Kalman's} Let $X$ and $Y$ be  Hilbert spaces, $A\in\calB(X)$ and $B\in \calB(X,Y)$. The following are equivalent. 
    \begin{enumerate}
        \item The pair $(A,B)$ associated to the continuous system \eqref{rdm-co} is AOB at some finite time $0<\tau_0<\infty$. 
        \item The pair $(A,B)$ associated to the continuous system \eqref{rdm-co} is AOB at any time $\tau\in(0,\infty)$.
        \item\label{rdm-tres} The pair $(A,B)$ associated to the continuous system \eqref{rdm-co} is AOB at infinite time.
        \item\label{rdm-cuatro} The pair $(A,B)$ associated to the discrete system \eqref{rdm-do} is AOB at infinite time. 
    \end{enumerate}
    Moreover, the equivalence between the first three items holds for general dynamic operators $A$ (not only for bounded ones).
\end{theorem}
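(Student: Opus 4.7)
The plan is to introduce the common unobservable subspaces
\[
N_\tau := \Ker \Psi_\tau = \bigcap_{t\in[0,\tau)}\Ker(Be^{tA}), \qquad N_\infty := \bigcap_{t\geq 0}\Ker(Be^{tA}), \qquad N := \bigcap_{k\geq 0}\Ker(BA^k),
\]
and prove the single chain of equalities $N_\tau = N_\infty = N$ for every $\tau\in(0,\infty)$. Once that is established, items (1)--(4) are just four different ways of saying the common subspace is trivial, and all four equivalences drop out at once.

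First I would dispose of the easy containments. The inclusion $N_\infty\subseteq N_\tau$ is just monotonicity of intersections. For $N\subseteq N_\infty$, observe that since $A\in\calB(X)$ the series $e^{tA}=\sum_{k\geq 0}(tA)^k/k!$ converges in operator norm uniformly on compacta, so boundedness of $B$ gives
\[
Be^{tA}x_0 \;=\; \sum_{k=0}^\infty \frac{t^k}{k!}\, BA^k x_0 \qquad (t\in\R),
\]
and the right-hand side vanishes whenever every $BA^k x_0$ does.

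The main step is $N_\tau \subseteq N$, and this is where I would use analytic continuation. Fix $x_0\in N_\tau$ and an arbitrary $\xi\in Y$. The scalar function $f_\xi(t):=\langle \xi, Be^{tA}x_0\rangle_Y$ is the restriction to $\R$ of an \emph{entire} function of $t\in\C$, because $t\mapsto e^{tA}$ is entire in operator norm (again using boundedness of $A$). By hypothesis $f_\xi$ vanishes on the interval $[0,\tau)$, so the identity theorem forces $f_\xi\equiv 0$ on $\C$. Taking the $k$-th derivative at $t=0$ gives $\langle \xi, BA^k x_0\rangle = 0$, and since $\xi\in Y$ was arbitrary, $BA^k x_0 = 0$ for all $k\geq 0$. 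Hence $x_0\in N$. This closes the chain $N\subseteq N_\infty\subseteq N_\tau\subseteq N$ for every $\tau>0$, so all three subspaces coincide. Setting any of them equal to $\{0\}$ is, by Definition \ref{rdm-def_obs}(2) together with the identification made in Section \ref{rdm-connections} for the discrete case, exactly statement (1), (2), (3) or (4), and the four-way equivalence follows.

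The main obstacle is the ``Moreover'' clause, where $A$ is only assumed to generate a $C_0$-semigroup. The clean entire-function argument breaks down because $t\mapsto e^{tA}x_0$ is in general only strongly continuous and need not extend analytically. To recover the equivalence of (1), (2), (3) I would pass through the Laplace transform: for $\Real\lambda$ large, $B(\lambda-A)^{-1}x_0 = \int_0^\infty e^{-\lambda t}Be^{tA}x_0\,dt$, which is holomorphic in $\lambda$ on the resolvent set. If $Be^{tA}x_0$ vanishes on some $[0,\tau_0)$, a semigroup/sliding argument combined with density of $D(A^\infty)$ and the Yosida approximations $A_n = nA(n-A)^{-1}$ (for which the bounded-$A$ proof above applies verbatim, and the orbits $e^{tA_n}x_0$ converge to $e^{tA}x_0$ uniformly on compact time intervals) transfers the vanishing to all $t\geq 0$. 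I would sketch this reduction rather than carry it out, and refer to \cite{rdm-Niko} for the full detail.
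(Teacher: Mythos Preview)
The paper does not supply its own proof of this theorem; it is quoted from \cite[Theorem 1.3.1]{rdm-Niko} and immediately followed by commentary, with no argument given. So there is nothing in the paper to compare your proposal against line by line.

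Judged on its own, your argument for the bounded case is correct and is the standard one: norm-analyticity of $t\mapsto e^{tA}$ together with the identity theorem forces $N_\tau=N_\infty=N$, and the four statements are then tautologically equivalent. One small point worth making explicit is that $\Ker\Psi_\tau$ is a priori defined via vanishing in $L^2([0,\tau),Y)$, i.e.\ almost everywhere, while your intersection $\bigcap_{t\in[0,\tau)}\Ker(Be^{tA})$ requires pointwise vanishing; continuity of $t\mapsto Be^{tA}x_0$ bridges this, but say so.

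Your sketch for the ``Moreover'' clause has a real gap. The Yosida approximants $A_n$ are bounded and $e^{tA_n}x_0\to e^{tA}x_0$ uniformly on compacta, but the hypothesis $Be^{tA}x_0=0$ on $[0,\tau_0)$ does \emph{not} transfer to $Be^{tA_n}x_0=0$ on that interval, so you cannot run the bounded-generator argument on $A_n$ and pass to the limit. The resolvent/Laplace idea you mention is closer to what is actually used, but it requires more than you have written. Since you explicitly defer the details to \cite{rdm-Niko}, this is acceptable as a proposal; just do not present the Yosida route as the working mechanism, because as stated it does not close.
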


Notice that Kalman's Theorem does not only prove the independence of time for finite time but for infinite time as well. Moreover, it also says that the property of being approximately observable is shared by both, the continuous and discrete problem, that is exactly the equivalence between \ref{rdm-tres}. and \ref{rdm-cuatro}. For approximate observability, all the possible schemes presented in this article are equivalent! 

Under the Dynamical Sampling hypotheses the previous points are exactly:
    \begin{enumerate}
    \item The family $\{e^{tA^*}g\}_{g\in\G,t\in[0,\tau_0)}$ is complete in $X$ for some $0<\tau_0<\infty$.
    \item The family $\{e^{tA^*}g\}_{g\in\G,t\in[0,\tau)}$ is complete in $X$ for any $\tau\in (0,\infty)$.
        \item The family $\{e^{tA^*}g\}_{g\in\G,t\in[0,\infty)}$ is complete in $X$.
        \item The family  $\{(A^*)^k g\}_{g\in\G,k\in\Z_+}$  is complete in $X$.
    \end{enumerate}

For exact observability, we only state a sufficient condition for time independence.  It is  done in the settings of Dynamical Sampling as an adaptation of  \cite[Theorem 5.5]{rdm-FIACPO}. 

\begin{theorem}\label{rdm-SCFrSA_rdm-FIACPO} Let  $A$ be a self-adjoint operator and $\G$ be a countable set in a Hilbert space $X$. 
	Then, $\{e^{tA^*}g\}_{g\in\G,t\in[0,1)}$ is a semi-continuous frame in $X$ if and only if $\{e^{tA^*}g\}_{g\in\G,t\in[0,\tau)}$ is a semi-continuous frame in $X$ for all finite positive $\tau$. As a consequence, the self-adjoint Dynamical Sampling problem is independent of time. 
\end{theorem}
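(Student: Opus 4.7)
The backward implication is immediate: if the system is SCF on $[0,\tau)$ for every $\tau > 0$, then in particular it is SCF on $[0,1)$.

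For the forward direction, suppose $\{e^{tA^*}g\}_{g\in\G,t\in[0,1)}$ (which equals $\{e^{tA}g\}_{g\in\G,t\in[0,1)}$ since $A^* = A$) is a semi-continuous frame with bounds $c_1,c_2 > 0$. The upper (Bessel) bound extends to every $\tau > 0$ automatically: the SCF hypothesis at $\tau=1$ forces $\G$ to be a Bessel system, and then Theorem \ref{rdm-Teo4.3rdm-FIACPO} guarantees $\{e^{tA}g\}_{g\in\G,t\in[0,\tau)}$ is Bessel for every finite $\tau$. Likewise, the lower bound is trivial for $\tau \geq 1$, since $\tau \mapsto \int_0^\tau \|Be^{tA}x\|^2 \,dt$ is monotone nondecreasing. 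The essential case is therefore $\tau \in (0,1)$, where the task reduces to producing $c'_1(\tau) > 0$ with $\int_0^\tau \|Be^{tA}x\|^2\,dt \geq c'_1(\tau)\|x\|^2$ for all $x \in X$.

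The decisive input is self-adjointness. Because $A$ is bounded self-adjoint, functional calculus gives $\|e^{zA}\| \leq e^{|\Real z|\,\|A\|}$ for every $z \in \C$, so for fixed $x \in X$ and $g \in \G$ the scalar map
$$\phi_{x,g}(z) := \langle x, e^{zA}g\rangle$$
extends from the real line to an entire function on $\C$ of exponential type at most $\|A\|$, with the explicit growth bound $|\phi_{x,g}(z)| \leq \|x\|\,\|g\|\,e^{|\Real z|\,\|A\|}$. A classical Remez/Nazarov-type inequality for entire functions of bounded exponential type then provides a constant $C = C(\tau, \|A\|) > 0$, independent of $\phi$, such that
$$\int_0^1 |\phi(t)|^2\,dt \leq C \int_0^\tau |\phi(t)|^2\,dt$$
for every entire $\phi$ of exponential type $\leq \|A\|$.

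Applying this inequality to each $\phi_{x,g}$ and summing over $g \in \G$ (swapping sum and integral by Tonelli) gives
$$\int_0^1 \|Be^{tA}x\|^2\,dt \leq C \int_0^\tau \|Be^{tA}x\|^2\,dt,$$
so the SCF lower bound at $\tau = 1$ yields $\int_0^\tau \|Be^{tA}x\|^2\,dt \geq (c_1/C)\|x\|^2$, as required. The main technical obstacle is the Remez/Nazarov comparison of $L^2$-norms on different intervals for entire functions of bounded exponential type; this is a classical harmonic analysis fact, and it is crucially enabled by the self-adjointness of $A$, which alone guarantees the holomorphic extension of $t \mapsto e^{tA}$ with the sharp exponential-type bound needed. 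Without the self-adjoint hypothesis the holomorphic extension is still present for a bounded $A$, but the growth is only controlled by $e^{|z|\|A\|}$ rather than by $e^{|\Real z|\|A\|}$, which is not enough to invoke the relevant Paley-Wiener/Nazarov machinery.
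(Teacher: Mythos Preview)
The paper does not actually prove this theorem: it is stated as an adaptation of \cite[Theorem~5.5]{rdm-FIACPO}, with the details deferred to \cite{rdm-DDM20}. So there is no ``paper's own proof'' to compare your argument against. I can only comment on the soundness of your attempt.

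Your overall strategy is reasonable, and your observation that self-adjointness of $A$ yields the sharp growth bound $|\phi_{x,g}(z)|\le \|x\|\,\|g\|\,e^{|\Real z|\,\|A\|}$ (boundedness on vertical lines) is correct and is indeed what distinguishes the self-adjoint case. The gap is in the central inequality. As you state it --- ``for every entire $\phi$ of exponential type $\le\|A\|$'' --- the claim
\[
\int_0^1|\phi(t)|^2\,dt \;\le\; C(\tau,\|A\|)\int_0^\tau|\phi(t)|^2\,dt
\]
is \emph{false}. Polynomials of arbitrary degree have exponential type $0$, and the rescaled Chebyshev polynomials $p_n(t)=T_n(2t/\tau-1)$ satisfy $|p_n|\le 1$ on $[0,\tau]$ while $|p_n(1)|=|T_n(2/\tau-1)|$ grows geometrically in $n$; hence the ratio $\int_0^1|p_n|^2\big/\int_0^\tau|p_n|^2$ is unbounded, and no uniform $C$ exists for this class.

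You seem aware of this, since in the last paragraph you emphasize that it is the sharper bound $e^{|\Real z|\,\|A\|}$ (not merely $e^{|z|\,\|A\|}$) that ``is enough to invoke the relevant Paley--Wiener/Nazarov machinery.'' That hypothesis does exclude the Chebyshev counterexample. But even for this narrower Bernstein-type class, the $L^2$ comparison on nested real intervals that you need is not a standard, named classical result; the usual Tur\'an/Nazarov lemmas for exponential sums carry constants depending on the number of frequencies, and the Logvinenko--Sereda theorem concerns relatively dense subsets of $\R$, not two compact intervals. You neither prove the inequality for the correct class nor give a precise citation. Since the entire forward direction for $\tau\in(0,1)$ rests on this step, the proof as written has a genuine gap: the key analytic inequality is mis-stated in one place and left unjustified in the form you actually need.
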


\subsection{Infinite Time}\label{rdm-sec_inf-time}\

When considering an infinite dimensional state spaces, one of the disadvantages of finite time schemes is that we need to sample at infinite spatial points (see Corollary \ref{rdm-coro5.3_rdm-FIACPO}).
It is therefore of interest to look at infinite schemes but using only a finite number of spatial samples.

\subsubsection{Exponentially stable operators}\

In this subsection we will show that for exponentially (strongly) stable dynamic operators
it is not possible to have infinite time sampling schemes with only finitely many space samples.  

\begin{proposition}\label{rdm-Proposition6.5.2rdm-Tucsnak}  Let $X$ be a Hilbert space and let $A\in \calB(X)$ be exponentially stable (strongly stable). If the pair $(A,B)$ associated to the continuous system \eqref{rdm-co} (discrete system \eqref{rdm-do}) is exactly observable at infinite time, then it is exactly observable at some finite time. 
\end{proposition}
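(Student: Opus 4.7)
The plan is to exploit exponential (resp.\ strong) stability to show that the tail of the observation integral (resp.\ series) is negligibly small, so that essentially all of the coercivity of $\Psi_\infty$ is already captured on a bounded time window. Concretely, I would split the infinite-time observability inequality into a finite piece over $[0,\tau]$ and a tail over $[\tau,\infty)$, dominate the tail by a geometric factor in $\tau$ using the norm decay of $e^{tA}$, and then choose $\tau$ large enough that the tail absorbs at most half of the lower bound.

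For the continuous case, let $m>0$ be the lower bound from exact observability at infinite time, so that
\begin{equation*}
    \int_0^\infty \|Be^{tA}x\|_Y^2\, dt \;\geq\; m^2\|x\|_X^2 \qquad \forall x\in X.
\end{equation*}
Using $B\in\calB(X,Y)$ and $\|e^{tA}\|\le e^{\omega t}$ with $\omega<0$, for any $\tau>0$ one has
\begin{equation*}
    \int_\tau^\infty \|Be^{tA}x\|_Y^2\, dt \;\leq\; \|B\|^2 \int_\tau^\infty e^{2\omega t}\, dt\, \|x\|_X^2 \;=\; \frac{\|B\|^2}{-2\omega}\, e^{2\omega\tau}\,\|x\|_X^2.
\end{equation*}
Since $e^{2\omega\tau}\to 0$ as $\tau\to\infty$, fix $\tau$ with $\frac{\|B\|^2}{-2\omega}\,e^{2\omega\tau}\le m^2/2$. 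Subtracting yields
\begin{equation*}
    \int_0^\tau \|Be^{tA}x\|_Y^2\, dt \;\geq\; \frac{m^2}{2}\,\|x\|_X^2,
\end{equation*}
which together with the automatic upper (admissibility) bound from Remark \ref{rdm-admissi_finite_time} gives exact observability at time $\tau$.

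For the discrete case the argument is the mirror image, replacing $e^{tA}$ by $A^k$ and the Lebesgue integral by the sum over $\Z_+$. Strong stability combined with $A\in\calB(X)$ yields $r(A)<1$, hence a geometric bound $\|A^k\|\le M\rho^k$ with $\rho\in(0,1)$; the tail $\sum_{k\ge N}\|BA^k x\|^2$ is then dominated by $\frac{\|B\|^2 M^2}{1-\rho^2}\rho^{2N}\|x\|^2$, which can be made smaller than half the lower bound by choosing $N$ large. The only delicate point is this step: promoting pointwise decay $A^k x\to 0$ to uniform geometric decay of $\|A^k\|$ requires the spectral radius characterization quoted after the definition of strong stability, so the main obstacle (and the only place where the two cases really diverge) is justifying the discrete tail estimate in the generality stated. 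With that in hand, the rest of the argument is identical, and admissibility at finite time is free from boundedness of $B$.
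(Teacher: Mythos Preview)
Your argument is correct, but the tail estimate you use is not the one the paper (following Tucsnak) employs. You bound the tail crudely via $\|Be^{tA}x\|\le \|B\|\,\|e^{tA}\|\,\|x\|$ (resp.\ $\|BA^kx\|\le \|B\|\,\|A^k\|\,\|x\|$) and sum the geometric factor. The paper instead shifts by the semigroup and reuses the \emph{upper} observability bound: in the discrete case,
\[
\sum_{k\ge \gamma+1}\|BA^k x\|^2=\sum_{j\ge 0}\|BA^j(A^{\gamma+1}x)\|^2\le c_2\,\|A^{\gamma+1}x\|^2\le c_2\,\|A^{\gamma+1}\|^2\|x\|^2,
\]
and similarly for the continuous case (this is the content of \cite[Proposition 6.5.2]{rdm-Tucsnak}). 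Both routes yield the same conclusion here; the paper's version has the advantage of not invoking $\|B\|$ at all, only admissibility, so it generalises verbatim to the setting of unbounded admissible observation operators, whereas your estimate genuinely needs $B\in\calB(X,Y)$. Since the present paper assumes bounded $B$ throughout, your simpler estimate is perfectly adequate.

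One minor remark on the discrete case: you correctly flag that the passage from $A^kx\to 0$ to a uniform geometric bound $\|A^k\|\le M\rho^k$ requires the spectral characterisation $r(A)<1$ stated after the definition; once that is granted, Gelfand's formula gives the bound. The paper's proof takes the same shortcut (indeed it asserts $\|A\|<1$, which is slightly stronger than needed; what is actually used is only $\|A^{\gamma+1}\|\to 0$).
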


\begin{proof}
For the continuous system the proof can be found in \cite[Proposition 6.5.2]{rdm-Tucsnak}. Using the same arguments of that proof one can show that if the discrete system \eqref{rdm-do} is exactly observable at infinite time, then it is observable at some finite time.
Indeed, since there
exists positive constants $c_1$ and $c_2$ such that
    \begin{equation*}
        c_1\|x\|^2\leq\sum_{k\in \Z_+}\|BA^{k}x\|^2  \leq c_2\|x\|^2,
    \end{equation*}
for all $\gamma\in \Z_+$, the observability maps $\Psi_\gamma x=\sum_{k=0}^{\gamma}BA^{k}x$ are bounded operators.  Therefore we only need to see that there exists $\gamma>0$ such that the operator $\Psi_\gamma$ is bounded from below. For this, 
    \begin{eqnarray*}
        \sum_{k=0}^{\gamma}\|BA^{k}x\|^2&=&
            \sum_{k\in \Z_+}\|BA^{k}x\|^2 -     
            \sum_{k\geq \gamma+1}\|BA^{k}x\|^2\\
            &\geq& c_1\|x\|^2 - \sum_{k\in \Z_+}\|BA^{k}A^{\gamma+1}x\|^2\\
            &\geq& c_1\|x\|^2 - c_2\|A^{\gamma+1}x\|^2
            \geq \left(c_1-c_2\|A\|^{2(\gamma+1)}\right)\|x\|^2.
    \end{eqnarray*}
    Since $A$ is strongly stable, its norm is strictly less than one. Hence, we can take $\gamma\in \Z_+$ sufficiently large such that   
     $(c_1-c_2\|A\|^{2(\gamma+1)})>0$ and for such $\gamma$ we obtain that $\Psi_\gamma$ is bounded below.
\end{proof}

In the notation of Dynamical Sampling this proposition can be  rewritten as follows:
\textit{Let $\G$ be a subset of vectors in $X$. If $\ObsDynOp$ is an  exponentially stable operator  such that
	$\{e^{t\ObsAd{\ObsDynOp}}g \}_{g\in\G,t\in[0,\infty)}$ is a semi-continuous frame for $X$, then there exists $\tau<\infty$ for which $\{e^{t\ObsAd{\ObsDynOp}}g \}_{g\in\G,t\in[0,\tau)}$ is a semi-continuous frame for $X$.
	Accordingly, if $\ObsDynOp$ is a strongly stable operator such that $\{(A^*)^kg\}_{g\in\G, k\in \Z_+}$ is a frame for $X$, then there exists $\gamma\in \Z_+$ such that $\{(A^*)^kg\}_{g\in\G, n\in \{0,1,...,\gamma\}}$ is a frame for $X$.}\\

As an immediate consequence of this proposition and  Theorem \ref{rdm-Theorem 1.3.19Niko}, we obtain the following corollary.

\begin{corollary}\label{rdm-prop 6.5.2 rdm-Tucsnak} Let $A$ be an  exponentially stable (resp. strongly stable) operator in a infinite dimension Hilbert space $X$. If the pair $(A,B)$ associated to the  system \eqref{rdm-co} (resp. \eqref{rdm-do}) is EOB at infinite time, then $B$ cannot be a compact operator. In particular, it cannot be a finite rank operator.
\end{corollary}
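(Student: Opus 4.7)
The plan is to argue by contradiction, chaining together the two immediately preceding results. Suppose, toward a contradiction, that $A$ is exponentially (resp.\ strongly) stable, $(A,B)$ is EOB at infinite time, and yet $B\in\calB(X,Y)$ is compact.

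First I would invoke Proposition \ref{rdm-Proposition6.5.2rdm-Tucsnak}: since $A$ is exponentially (resp.\ strongly) stable and $(A,B)$ is EOB at infinite time, there exists a finite $\tau<\infty$ (resp.\ $\gamma\in\Z_+$) at which the pair $(A,B)$ is already EOB. This step reduces the infinite-time situation to a finite-time one in which the compactness hypothesis on $B$ can be brought to bear.

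Next I would apply Theorem \ref{rdm-Theorem 1.3.19Niko}: in an infinite dimensional Hilbert space, a compact observation operator $B$ makes exact observability at any finite moment impossible. This directly contradicts the conclusion of the previous step, so the assumption that $B$ is compact must fail. The ``in particular'' clause about finite rank is then immediate, since every finite-rank bounded operator between Hilbert spaces is compact.

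I do not expect any genuine obstacle here, since the corollary is stated precisely as the composition of the two previous results; the only care needed is to make sure the finite time produced by Proposition \ref{rdm-Proposition6.5.2rdm-Tucsnak} is the same type (continuous $\tau<\infty$ or discrete $\gamma<\infty$) that Theorem \ref{rdm-Theorem 1.3.19Niko} rules out, which is automatic because both the proposition and the theorem are stated in parallel for the continuous system \eqref{rdm-co} and the discrete system \eqref{rdm-do}.
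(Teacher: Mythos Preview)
Your proposal is correct and follows exactly the paper's approach: the paper states the corollary as ``an immediate consequence of this proposition and Theorem \ref{rdm-Theorem 1.3.19Niko},'' which is precisely the chain Proposition \ref{rdm-Proposition6.5.2rdm-Tucsnak} $\Rightarrow$ finite-time EOB $\Rightarrow$ contradiction via Theorem \ref{rdm-Theorem 1.3.19Niko} that you outline. One small caveat: Theorem \ref{rdm-Theorem 1.3.19Niko} as written in the paper is phrased only for the continuous system, so your claim that it is ``stated in parallel'' for both is not literally accurate; however, the discrete analogue is even easier (if $B$ is compact then $\Psi_\gamma=(B,BA,\dots,BA^\gamma)$ is compact and hence cannot be bounded below on an infinite-dimensional space), and the paper itself glosses over this distinction.
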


Analogously, for the Dynamical Sampling context we recover Corollary \ref{rdm-coro5.3_rdm-FIACPO}: \textit{If $A$ is exponentially  stable (resp. strongly stable) in an infinite dimension Hilbert space and  $\{e^{tA^*}g\}_{g\in\G, t\in[0,\infty)}$ is a semi-continuous frame (resp. $\{(A^*)^kg\}_{g\in\G, k\in\Z_+}$ is a frame) for $X$, then $|\G|=\infty$.}

\subsubsection{Non-exponentially stable operators}\

For non exponentially (strongly) stable dynamic operators the scenario is quite different. We can exhibit some results that allow  to sample a signal at a finite amount of spatial points and recover it after an infinite time (continuous or discrete) evolution process.

We start with the case of discrete dynamics. For ease of the exposition, we show only the results for rank one operators $B$. Due to the Riesz Representation Theorem they are equivalent to sampling at  a single spatial point $b$.
In this case, if the dynamic operator $A$ has an orthonormal basis of eigenvectors the following theorem holds.

\begin{theorem}\label{rdm-OnePointFrame}(\cite[Theorem 3.16]{rdm-acmt})
Let $A$ be a bounded linear operator in an infinite dimensional Hilbert space $X$ with an orthogonal basis of eigenvectors $\{\varphi_n\}_{n\geq 1}$ such that $A\varphi_n = -\lambda_n \varphi_n$.
			Let $b\in X$. Then the set $\{(A^*)^kb\}_{k\in\Z_+}$ is a frame for $X$ if and only if  
		\begin{enumerate}
			\item\label{rdm-second_item}  $|\lambda_n| < 1$  for all $n\in \N.$  
			\item $|\lambda_n| \to 1$. 
			\item $\{\lambda_n\}$ satisfies Carleson's condition in the disc: 
			\begin{equation*}
			\label{rdm-carleson-cond}
			\inf_{n} \prod_{k\neq n} \left|\frac{\lambda_n-\lambda_k}{1-\overline{\lambda}_n\lambda_k}\right|\geq \delta, \, \qquad \text{ for some } \delta>0.
			\end{equation*}
			\item\label{rdm-fifth_item}  $0<c_1\le \frac {|\langle b,\varphi_n\rangle|} {\|\varphi_n\|\sqrt{1-|\lambda_n|^2}}  \le c_2< \infty$, for some constants $c_1,  c_2$ independent for all $n\in \N$.
		\end{enumerate}
	\end{theorem}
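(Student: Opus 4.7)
The plan is to translate the frame condition on $\{(A^*)^k b\}_{k\in\Z_+}$ into the abstract observability framework already built up in the paper, and then to identify the relevant $\ell^2$--sequence with normalized Szeg\H{o} kernels in the Hardy space $H^2(\D)$ in order to invoke the classical Shapiro--Shields/Carleson characterization of interpolating sequences. I would set $B\colon X\to \CC$ to be the bounded linear functional $Bx:=\langle x,b\rangle$. By the boxed identifications in Subsection~\ref{rdm-connections}, ``$\{(A^*)^k b\}_{k\in\Z_+}$ is a frame for $X$'' is equivalent to ``$(A,B)$ associated to the discrete system \eqref{rdm-do} is exactly observable at infinite time''. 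Since $\{\varphi_n\}$ is orthogonal (hence an unconditional basis), Theorem~\ref{rdm-cond_nec_suf} reduces the statement to the simultaneous validity of (a) $\{\calE_n\}$ is an unconditional sequence in $\ell^2(\Z_+)$, and (b) $\|\calE_n\|\asymp \|\varphi_n\|$, where $\calE_n(k)=BA^k\varphi_n=(-\lambda_n)^k\langle \varphi_n,b\rangle$.

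I would then extract conditions \emph{(1)}, \emph{(2)}, \emph{(4)} from (b) via Proposition~\ref{rdm-prop_eq_normas_eq} applied to the discrete infinite-time case. A direct computation gives $\|\calE_n\|_{\ell^2}^2=|\langle\varphi_n,b\rangle|^2\sum_{k\geq 0}|\lambda_n|^{2k}$, which is finite if and only if $|\lambda_n|<1$; this is condition \emph{(1)} (equivalently \eqref{rdm-autoval_disco}). Under \emph{(1)}, $\|\calE_n\|^2=|\langle\varphi_n,b\rangle|^2/(1-|\lambda_n|^2)$, so the norm equivalence (b) is exactly \eqref{rdm-cond_B_disc}, which is condition \emph{(4)} (using $\|B\varphi_n\|=|\langle b,\varphi_n\rangle|$). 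Finally, combining \emph{(4)} with the expansion $\|b\|^2=\sum_n |\langle b,\varphi_n\rangle|^2/\|\varphi_n\|^2<\infty$ yields $\sum_n(1-|\lambda_n|^2)<\infty$, a Blaschke-type condition that forces $|\lambda_n|\to 1$, i.e.\ condition \emph{(2)}.

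For (a) I would pass to the Hardy space. Under the canonical isometry $\ell^2(\Z_+)\cong H^2(\D)$ sending $(a_k)\mapsto \sum_{k\geq 0}a_k z^k$, the vector $\calE_n$ corresponds to
\[
\langle \varphi_n,b\rangle \,(1+\lambda_n z)^{-1} \;=\; \langle \varphi_n,b\rangle\, k_{w_n}(z), \qquad w_n:=-\overline{\lambda_n}\in \D,
\]
where $k_w(z)=(1-\overline{w}z)^{-1}$ is the Szeg\H{o} reproducing kernel at $w$, of norm $(1-|w|^2)^{-1/2}$. Because (b) is in force, unconditionality of $\{\calE_n\}$ in $\ell^2(\Z_+)$ is equivalent, after normalization, to $\{k_{w_n}/\|k_{w_n}\|\}$ being a Riesz sequence in $H^2(\D)$. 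By the classical Shapiro--Shields theorem (a consequence of Carleson's interpolation theorem), this happens precisely when $\{w_n\}$ is an interpolating sequence, i.e.\ satisfies the Carleson condition. A short algebraic check using $|w_n-w_k|=|\lambda_n-\lambda_k|$ and $|1-\overline{w_k}w_n|=|1-\overline{\lambda_n}\lambda_k|$ then transforms Carleson's condition on $\{w_n\}$ into condition \emph{(3)} on $\{\lambda_n\}$, completing both directions.

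The main obstacle is this last step: the passage from the abstract unconditionality of $\{\calE_n\}$ to the concrete Carleson condition on the spectrum of $A$. I would not reprove the Shapiro--Shields characterization but cite it; what does require some care is the bookkeeping between the two norm normalizations, namely that the $\calE_n$ have $\ell^2$-norm $\asymp \|\varphi_n\|$ (by \emph{(4)}) while the unnormalized $H^2$-kernels $k_{w_n}$ have norm $(1-|\lambda_n|^2)^{-1/2}$, so that unconditionality of $\{\calE_n\}$ matches precisely the Riesz sequence property of the \emph{normalized} reproducing kernels, which is the property characterized by Carleson's condition.
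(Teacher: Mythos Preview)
Your proposal is correct and follows precisely the route the paper itself sketches: the paper does not give a self-contained proof of this theorem (it is quoted from \cite{rdm-acmt}), but immediately after Theorem~\ref{rdm-conclusion1} it remarks that the Carleson condition in Theorem~\ref{rdm-OnePointFrame} is exactly the equivalent formulation for $\{\calE_n\}$, $\calE_n(k)=\langle b,\varphi_n\rangle\lambda_n^k$, to be an unconditional sequence in $\ell^2(\Z_+)$, so that Theorem~\ref{rdm-cond_nec_suf} together with the discrete infinite-time case of Proposition~\ref{rdm-prop_eq_normas_eq} yields the result. Your Hardy-space identification of $\calE_n$ with a scalar multiple of the Szeg\H{o} kernel and the appeal to Shapiro--Shields supply exactly the detail the paper defers to \cite{rdm-acmt} and the references therein.
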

	
This theorem has its analogy in the observability question of discrete systems: \textit{Let $X$ and $A$ be as in   Theorem \ref{rdm-OnePointFrame}. Let $B$ be a rank one operator represented by $b\in X$. Then the  pair $(A,B)$ associated to the discrete system \eqref{rdm-do} is EOB at infinite time if and only if conditions 1. to 4. in Theorem \ref{rdm-OnePointFrame} hold.}

Note that this can be extended  to the case of the existence of  unconditional basis of eigenvectors, instead of an orthonormal one. This is based on the following  facts: Given $A\in \calB(X)$,  
\begin{itemize}
    \item $X$ has an unconditional basis of eigenvectors of $A$ if and only if it has a Riesz basis of eigenvectors of $A$.  Indeed, if $\{\varphi_n\}$ is unconditional basis of eigenvectors of $A$, then $\{\frac{\varphi_n}{\|\varphi_n\|}\}$ is a Riesz basis. The converse is trivial.
    \item $X$ has a Riesz basis of eigenvectors of $A$ if and only if $A$ is conjugated by an isomorphism to an operator which has an orthonormal basis of eigenvectors. In effect,  if $\{\varphi_n\}$ a Riesz basis of eigenvectors of $A$, then there exists an isomorphism $T\in \calB(X)$ such that $T\varphi_n=e_n$, where $\{e_n\}$ is an orthonormal basis of $X$ and  are the eigenvectors of the operator $\widetilde{A}:=TAT^{-1}\in\calB(X)$. The reciprocal follows similarly.
\end{itemize}

Moreover, for the cases when $A$ is a normal operator, and under discrete dynamics, it is proved in \cite{rdm-accmp} that if the set $\{(A^*)^kb\}_{k\in\Z_+}$ is a frame for $X$, then the  operator $A$ must be \textit{diagonal}.

This  can be extended to the continuous case. If the dynamic operator $A$ has an unconditional basis of eigenvectors,  from Theorem \ref{rdm-cond_nec_suf}, Proposition \ref{rdm-prop_eq_normas_eq} and Corollary \ref{rdm-prop 6.5.2 rdm-Tucsnak} we are lead to some necessary and sufficient conditions for observability of a continuous infinite-time system.

\begin{theorem}\label{rdm-conclusion1}
		Let $A$ be a bounded linear operator in an infinite dimensional Hilbert space $X$ with an unconditional basis of eigenvectors $\{\varphi_n\}_{n\geq 1}$ such that $A\varphi_n = -\lambda_n \varphi_n$. Let $B$ be a rank one operator represented by $b\in X$. Then the  pair $(A,B)$ associated to the continuous system \eqref{rdm-co} is EOB at infinite time if and only if
		\begin{enumerate}
			\item $\Real(\lambda_n) > 0$  for all $n\in\N.$  
			\item $\Real(\lambda_n)\to 0$. (It cannot be exponentially stable)
			\item \label{rdm-incond_carleson} $ \{\calE_n\}_{n\geq 1}$, given by  $\calE_n(t)=e^{-\lambda_n t}\langle b,\varphi_n \rangle$, is an unconditional sequence of $L^2([0,\infty),\mathbb{C})$.
			\item  $0<c_1\le \frac {|\langle b, \varphi_n\rangle|} {\|\varphi_n\|\sqrt{2\Real(\lambda_n)}}  \leq c_2< \infty$, for some constants $c_1,  c_2$ independent of $n$.
		\end{enumerate}
\end{theorem}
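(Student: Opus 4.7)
The plan is to specialize the three general results---Theorem \ref{rdm-cond_nec_suf}, Proposition \ref{rdm-prop_eq_normas_eq}, and Corollary \ref{rdm-prop 6.5.2 rdm-Tucsnak}---to the rank-one setting. First I would view $B$ as the analysis operator of the singleton $\G=\{b\}$, so that $Y=\C$ and $Bx=\langle x,b\rangle$. Using $A\varphi_n=-\lambda_n\varphi_n$, and thus $e^{tA}\varphi_n=e^{-\lambda_n t}\varphi_n$, the sequence
\[
\calE_n(t)=(\Psi\varphi_n)(t)=Be^{tA}\varphi_n=e^{-\lambda_n t}\langle\varphi_n,b\rangle
\]
agrees (up to an inessential conjugation that affects neither $L^2$-norms nor unconditionality) with the one in condition \ref{rdm-incond_carleson}, and $\|B\varphi_n\|=|\langle b,\varphi_n\rangle|$.

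Theorem \ref{rdm-cond_nec_suf} then translates EOB at infinite time into the conjunction of (i) $\{\calE_n\}$ being an unconditional sequence in $L^2([0,\infty),\C)$, and (ii) $\|\calE_n\|\asymp\|\varphi_n\|$. Statement (i) is literally condition \ref{rdm-incond_carleson}, and Proposition \ref{rdm-prop_eq_normas_eq} applied with $\calT=[0,\infty)$ converts (ii) directly into conditions 1 and 4, once one substitutes the rank-one identity $\|B\varphi_n\|/\|\varphi_n\|=|\langle b,\varphi_n\rangle|/\|\varphi_n\|$. Three of the four conditions are therefore obtained immediately in both directions of the equivalence.

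It remains to dispatch condition 2. In the sufficiency direction it plays no role: conditions 1, 3, and 4 already supply (i) and (ii), so Theorem \ref{rdm-cond_nec_suf} closes the ``if'' part. For necessity, I would exploit that an unconditional basis in a Hilbert space renormalizes to a Riesz basis (hence a frame): $\{\varphi_n/\|\varphi_n\|\}$ is Riesz, so
\[
\|b\|_X^2\asymp\sum_{n\geq 1}\frac{|\langle b,\varphi_n\rangle|^2}{\|\varphi_n\|^2}<\infty.
\]
Combining this with condition 4 yields $\sum_{n\geq 1}\Real(\lambda_n)<\infty$, which forces $\Real(\lambda_n)\to 0$.

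The main obstacle lies precisely at this last step. Corollary \ref{rdm-prop 6.5.2 rdm-Tucsnak} alone only forbids exponential stability---i.e., it rules out $\inf_n\Real(\lambda_n)>0$---which is strictly weaker than $\Real(\lambda_n)\to 0$. The upgrade to full convergence hinges on the summability forced by $b\in X$ via the Riesz/biorthogonal expansion, and this is also the one place where the rank-one hypothesis on $B$ is used beyond allowing the identification $B\varphi_n=\langle\varphi_n,b\rangle$.
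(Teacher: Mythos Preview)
Your proposal is correct and follows precisely the route the paper indicates (the paper does not give a detailed proof but only says the theorem follows from Theorem~\ref{rdm-cond_nec_suf}, Proposition~\ref{rdm-prop_eq_normas_eq}, and Corollary~\ref{rdm-prop 6.5.2 rdm-Tucsnak}). You specialize those results to the rank-one, infinite-time continuous setting exactly as intended.

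On condition~2 you are in fact more careful than the paper. The paper invokes Corollary~\ref{rdm-prop 6.5.2 rdm-Tucsnak} for this item, but as you correctly note, that corollary only forbids exponential stability, i.e.\ it rules out $\inf_n\Real(\lambda_n)>0$, which is weaker than $\Real(\lambda_n)\to 0$. Your argument---normalize the unconditional basis to a Riesz basis, use the resulting frame inequality to get $\sum_n|\langle b,\varphi_n\rangle|^2/\|\varphi_n\|^2<\infty$, and combine with the lower bound in condition~4---yields $\sum_n\Real(\lambda_n)<\infty$ and hence the full convergence. This both fills the gap and shows that condition~2 is in fact a consequence of conditions~1 and~4 together with $b\in X$, so that it is redundant in the equivalence (though kept, presumably, to parallel Theorem~\ref{rdm-OnePointFrame}).
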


Reformulating this with the Dynamical Sampling notation one can read: \textit{Let $X$ and $A$ as in  Theorem \ref{rdm-conclusion1}. Let $b\in X$. Then, the set $\{e^{tA^*}b\}_{t\in[0,\infty)}$ is a semi-continuous frame for $X$ if and only if conditions 1. to 4. in Theorem \ref{rdm-conclusion1} hold.}

Notice that under the above hypothesis the functions $\calE_n$ are scalar-valued. By \cite[Theorem 4.2.2 and Remark 4.2.3]{rdm-Niko} we have an equivalent formulation for condition  \ref{rdm-incond_carleson}: If $\{\lambda_n\}_{n\geq 1}\subset \C$ is such that $\Real(\lambda_n)>0$ for all $n\in \N$, then  $\{e^{-\lambda_n t}\}_{n\geq 1}$ is an unconditional sequence of $L^2([0,\infty))$ if and only if the sequence $\{\lambda_n\}_{n\geq 1}$ satisfies the \textit{half-plane Carleson condition}
\begin{equation*}\label{rdm-half-plane-Carleson}
    \prod_{k\neq n} \left|\frac{\lambda_n-\lambda_k}{\lambda_n-\overline{\lambda_k}}\right|\geq \delta>0, \qquad \forall n\geq 1.
\end{equation*}

Moreover, the Carleson condition in Theorem \ref{rdm-OnePointFrame} is just an equivalent formulation for $\calE_n(k) = \langle b,\varphi_n \rangle (\lambda_n)^k$ to be an unconditional sequence of $\ell^2([0,\infty))$ (cf. \cite{rdm-acmt} and references therein).

It is also true that, if $A$ is normal, for the pair $(A,B)$ associated to the continuous system \eqref{rdm-co} to be EOB at infinite time, the operator $A$ must be diagonal. This will be presented in \cite{rdm-DDM20}.

It is worth to notice that there is not an 
equivalent formulation of Kalman's Theorem (Theorem \ref{rdm-Theorem 1.3.1Niko}) for exact observability instead of approximate observability.
Equivalently, $\{e^{tA^*}g\}_{g\in\G,t\in[0,\infty)}$ is complete if and only if $\{(A^*)^kg\}_{g\in\G,k\in\Z_+}$ is complete but it is not true that $\{e^{tA^*}g\}_{g\in\G,t\in[0,\infty)}$ is a semi-continuous frame for $X$ if and only if $\{(A^*)^kg\}_{g\in\G,k\in\Z_+}$ is a frame for $X$.

To construct a counterexample it is enough to find an operator $A$ that satisfies the conditions on Theorem \ref{rdm-OnePointFrame} (in particular condition 1.) but that fails to satisfy conditions of Theorem \ref{rdm-conclusion1}. Notice that one can start with a diagonal operator $\tilde{A}$ for this purpose and then rotate the set of eigenvalues to have a new operator $A$ that has at least one eigenvalue with $Re(\lambda_n) < 0$.

However, using the self-inverse bijection $M:\mathbb{D}\to\C_+$, given by 
\begin{equation*}\label{rdm-M del D en C}
M(z):=\frac{1-z}{1+z},    
\end{equation*}  
it easy to notice that conditions 1. to 3.  in Theorem \ref{rdm-OnePointFrame} for $\{\lambda_n\}$  are in correspondence with conditions 1. to 3. conditions in Theorem \ref{rdm-conclusion1} for the sequence $\{M(\lambda_n)\}$. 

Assuming that $|\lambda_n + 1|>\varepsilon >0$ for all $n\in \N$, and defining $\widetilde{b}\in X$ satisfying $\langle\widetilde{b},\varphi_n\rangle = \frac{\sqrt{2}}{|1+\lambda_n|}\langle b,\varphi_n\rangle$ for all $n\geq 1$, we obtain
\begin{eqnarray*}
\frac{|\langle b,\varphi_n\rangle|}{\|\varphi_n\|\sqrt{1-|\lambda_n|^2}}&=&
\frac{\sqrt{2}|\langle b,\varphi_n\rangle|}{\|\varphi_n\|\sqrt{2\Real(M(\lambda_n))} \, |1+\lambda_n|}\\
&=&
\frac{|\langle \widetilde{b},\varphi_n\rangle|}{\|\varphi_n\|\sqrt{2Re(M(\lambda_n))}}. 
\end{eqnarray*}
Therefore, condition 4. in Theorem 3.10 for $b$ is equivalent to condition 4. in Theorem 3.9 for $\widetilde{b}$. 

This yields: \textit{The pair $(A,B)$ associated to the discrete system \eqref{rdm-do} is EOB at infinite time if and only if the pair $(M(A),\widetilde{B})$ associated to the continuous system \eqref{rdm-co} is EOB at infinite time, where $\widetilde{B}$ is the analysis operator for $\widetilde{b}$.}

The condition $|\lambda_n +1|>\varepsilon>0$ is needed for $A$ and $M(A)$ to be simultaneously bounded. It can be replaced without loss of generality by $|\lambda_n - z|>\varepsilon>0$ for some $|z|=1$ if we use the map $M$ composed with a rotation of the circle.

As a final comment, it is worth noticing that, by Kalman's theorem, and the fact that EOB implies AOB, conditions 1. to 4. in Theorems \ref{rdm-OnePointFrame} and \ref{rdm-conclusion1} are sufficient conditions for AOB under any type of dynamics. That is, for example, that conditions for the discrete system to be EOB, automatically imply the continuous system to be AOB.

More general conditions for exact observability when not in the case of rank one operators can also be found in \cite[Theorem 2.3]{rdm-DSOFIS}, \cite{rdm-cms} and \cite[Theorems 4.2.4 and 4.2.5]{rdm-Niko}.

 The analogies between discrete and continuous cases will still follow the lines of the ones given in this article, although many properties turn out to be more difficult to handle than the rank one case. For details we refer to \cite{rdm-DDM20}.

\section*{Acknowledgements}

This project has mainly been carried out while R. D\'\i az Mart\'\i n hold  a postdoctoral fellowship from the CONICET, and  I. Medri hold postdoctoral fellowships from CONICET and Vanderbilt University. The authors thank those institutions for their support.

U. Molter was supported by Grants UBACyT 20020170100430BA (University of Buenos Aires),\\ PIP11220150100355 (CONICET) and PICT 2014-1480 (Ministerio de Ciencia, Tecnolog\'{\i}a e Innovaci\'on, Argentina).

\end{document}